\documentclass[11pt]{amsart}
\usepackage{amssymb,amsmath,amsthm,amsrefs}
\usepackage[mathscr]{eucal}
\usepackage{fullpage}
\usepackage{enumerate}
\usepackage{verbatim}
\usepackage{graphicx,float}

\usepackage{color}



\usepackage{psfrag}

\newtheorem{theorem}[equation]{Theorem}
\newtheorem{lemma}[equation]{Lemma}

\newtheorem{corollary}[equation]{Corollary}

\newtheorem{definition}[equation]{Definition}

\theoremstyle{remark}
\newtheorem{remark}[equation]{Remark}
\newtheorem{notation}[equation]{Notation}

\numberwithin{equation}{section}

\newcommand{\R}{\mathbb{R}}

\newcommand{\Z}{\mathbb{Z}}
\newcommand{\N}{\mathbb{N}}
\newcommand{\Sph}{\mathbb{S}}
\newcommand{\T}{\mathbb{T}}

\newcommand{\stab}{\operatorname{Stab}}
\newcommand{\Grp}{\mathscr{G}} 
\newcommand{\GrpSigma}{\mathscr{G}^{\mathbf{\Sigma}}} 
\newcommand{\GrpC}{\mathscr{G}^{\mathbf{C}}}

\newcommand{\GrpCQ}{\mathscr{G}^{\mathbf{C}_Q}} 
\newcommand{\GrpCa}{\mathscr{G}^{\mathbf{C}_{axes}}} 
\newcommand{\GrpMp}{\mathscr{G}^{M+}} 
\newcommand{\GrpM}{\mathscr{G}} 
 
\newcommand{\HGrp}{\mathscr{H}} 

\newcommand{\Gsym}{\mathscr{G}_{sym}}

\newcommand{\Wcal}{\mathcal{W}}

\newcommand{\Sk}{{\mathscr{S}}}

\newcommand{\pp}{\mathsf{p}}
\newcommand{\ptt}{\mathsf{t}}
\newcommand{\pqq}{\mathsf{q}}
\newcommand{\QQ}{\mathsf{x}}

\newcommand{\rot}{\mathsf{R}}
\newcommand{\refl}{{\underline{\mathsf{R}}}}
 
\newcommand{\Om}{\Omega} 
\newcommand{\Ombold}{\mathbf{\Omega}} 
\newcommand{\Omboldu}{\underline{\mathbf{\Omega}}} 
\newcommand{\Omu}{{\underline{\Omega}}} 
\newcommand{\Abold}{\mathbf{A}}

\newcommand{\cone}{\mbox{$\times \hspace*{-0.244cm} \times$}}
\newcommand{\Span}{\operatorname{Span}}
\newcommand{\Ktilde}{\widetilde{K}}

\newcommand{\Qv}{{{Q\mspace{.8mu}\!\!\!\!{/}\,}}}

\title{The Lawson surfaces are determined by their symmetries and topology} 

\author{Nikolaos Kapouleas}
\address{Department of Mathematics, Brown University, Providence, RI 02912}
\email{nicos@math.brown.edu}

\author{David Wiygul}
\address{3503 S Norton Ave, 
           Los Angeles, CA 90018,
           USA 
          }
\email{david.wiygul@gmail.com}

\date{\today}

\begin{document}

\begin{abstract}
We prove that a closed embedded minimal surface in the round three-sphere which satisfies the symmetries of a Lawson surface and has the same genus is congruent 
to the Lawson surface. 
\end{abstract}

\maketitle

\section{Introduction}

\subsection*{The general framework and brief discussion of the results}
$\phantom{ab}$
\nopagebreak

In 1970 Lawson discovered the first closed embedded minimal surfaces $\xi_{m,k}$ in the round three-sphere $\Sph^3$ \cite{Lawson} other 
than the classically known great two-sphere and Clifford torus. 
Since then many more such minimal surfaces have been found \cite{KPS,kapouleas:yang,choe:soret,kapouleas:wiygul,wiygul2020,kapI,kapII:mcg,LDg,ketover2020}. 
It is natural then to ask uniqueness, classification, and characterization questions for closed embedded minimal surfaces in $\Sph^3$.  

In this direction, Almgren \cite{almgren} proved that the only closed immersed minimal surface in $\Sph^3$ of genus zero is the great two-sphere. 
More recently enormous progress has been made in the genus one case by the recent resolutions of the Lawson conjecture by Brendle \cite{brendle}, 
stating that the only closed embedded minimal surface of genus one in $\Sph^3$ is the Clifford torus, 
and the Willmore conjecture by Marques and Neves \cite{neves}, 
where they also characterize the Clifford torus and the great two-sphere as the only closed minimal surfaces in $\Sph^3$ of area $\le2\pi^2$. 
On the other hand, very little is currently known 
on uniqueness, classification, and characterization questions for closed embedded minimal surfaces of genus $\ge2$ in $\Sph^3$.  
In particular, 
there are many cases where different constructions produce such surfaces closely resembling each other, 
and although these surfaces are strongly expected to be identical,
there is actually no proof that they are. 

In this article we prove Theorem \ref{T:main} with the following characterization of the Lawson surfaces $\xi_{m,k}$: 
if a closed embedded minimal surface in $\Sph^3$ has the same genus as a Lawson surface and satisfies the symmetries of the Lawson surface 
(actually when $m=k$ the symmetries in an index-$2$ subgroup), then it is congruent to the Lawson surface. 
The proof of Theorem \ref{T:main} is based 
on establishing that the assumptions imply that the geometry of the surface in consideration shares 
increasingly similar features with the Lawson surface (see Section \ref{S:main}), 
and eventually reducing the result to proving the uniqueness of the Lawson discs 
(fundamental domains of the Lawson surfaces under the symmetries).  
The uniqueness of the Lawson discs is proved by generalizing the uniqueness of the Lawson discs of $\xi_{g,1}$ proved in \cite{LindexI},  
where the maximum principle in the style of \cite{schoen1983} is used in combination with detailed properties of the elementary spherical geometry of the Lawson construction. 

We hope that this will be a first step towards answering 
further uniqueness, classification, and characterization questions for closed embedded minimal surfaces of genus $\ge2$ in $\Sph^3$.  
In particular towards strengthening Theorem \ref{T:main} by weakening the symmetry assumptions, 
for example to a priori allowing ``flapping of the wings'' of the Lawson surfaces in the spirit of \cite{alm20}*{Section 4.2} as discussed in Remark \ref{R:GrpS};    
or towards proving similar characterization statements for other surfaces, 
for example for some doublings 
\cite{KPS,kapouleas:yang,wiygul2020,kapI,kapII:mcg,LDg,ketover2020} 
or some other desingularizations 
\cite{choe:soret,kapouleas:wiygul}, 
ensuring this way that certain different constructions provide identical surfaces.

\subsection*{Notation and conventions}
$\phantom{ab}$
\nopagebreak

In order to ensure that this article is self-contained there is some overlap with \cite{LindexI}. 
In particular, with a few exceptions (see \ref{R:0}), we adopt the notation of \cite{LindexI}, which we describe now.  

\begin{notation} 
\label{span} 
We denote by $\Sph^3 \subset \R^{4}$ the unit $3$-dimensional sphere. 
For any 
$A\subset \Sph^3 \subset \R^{4}$ 
we denote by $\Span(A)$ the span of $A$ as a subspace of $\R^{4}$  
and we set $\Sph(A):=\Span(A)\cap\Sph^3$. 
\qed 
\end{notation} 

Given now a vector subspace $V$ of the Euclidean space $\R^{4}$, 
we denote by $V^\perp$ its orthogonal complement in $\R^{4  }$,  
and we define the reflection in $\R^{4}$ with respect to $V$,   
$\refl_V: \R^{4} \to \R^{4} $, by 
\begin{equation} 
\label{reflV} 
\refl_V:= \Pi_V - \Pi_{V^\perp}, 
\end{equation}  
where $\Pi_V$ and $\Pi_{V^\perp}$ are the orthogonal projections of $\R^{4}$ onto $V$ and $V^\perp$ respectively. 
Alternatively 
$\refl_V: \R^{4  } \to \R^{4  } $ is the linear map which restricts to the identity on $V$ 
and minus the identity on $V^\perp$.  
Clearly the fixed point set of $\refl_V$ is $V$. 

\begin{definition}[$A^\perp$ and reflections $\refl_A$] 
\label{D:refl} 
Given any 
$A\subset  \Sph^3 \subset \R^{4  }$,  
we define $A^\perp:=\left(\,Span(A) \, \right)^\perp \cap \Sph^3 $  
and  
$\refl_A : \Sph^3 \to \Sph^3 $ to be the restriction to $\Sph^3$ of $\refl_{\Span(A)}$. 
Occasionally we will use simplified notation:  
for example for $A$ as before and $p\in\Sph^3$ we may write $\Sph(A,p)$ and $\refl_{A,p}$ instead of $\Sph(A\cup\{p\})$ and $\refl_{A\cup\{p\}}$ 
respectively. 
\qed 
\end{definition} 

Note that the set of fixed points of $\refl_A$ above is $\Sph(A)$ as in notation \ref{span},  
which is $\Sph^3$ 
or a great two-sphere  
or a great circle  
or the set of two antipodal points  
or the empty set, 
depending on the dimension of $\Span(A)$. 
Following now the notation in \cite{choe:hoppe}, we have the following. 

\begin{definition}[The cone construction] 
\label{D:cone} 
For $p,q\in\Sph^3$ which are not antipodal we denote 
the minimizing geodesic segment joining them by $\overline{pq}$. 
For $A,B\subset\Sph^3$ such that no point of $A$ is antipodal to a point of $B$ 
we define the cone of $A$ and $B$ in $\Sph^3$ by 
$$ 
A\cone B := \bigcup_{p\in A, \, q\in B} \overline{pq}. 
$$ 
If $A$ or $B$ contains only one point we write the point instead of $A$ or $B$ respectively; 
we have then $p\cone q = \overline{pq} $ 
for any 
$p,q\in\Sph^3$ which are not antipodal. 
More generally, 
given linearly independent $p_1,\cdots,p_k\in \Sph^3$, 
we define inductively for $k\ge3$ 
$
\overline{p_1\cdots p_k} := p_k \cone \overline{p_1\cdots p_{k-1} }.  
$
\qed 
\end{definition} 

\begin{definition}[Tetrahedra] 
\label{TEV} 
If $p_1,p_2,p_3,p_4\in \Sph^3$ are linearly independent, 
then 
$T:= \overline{p_1p_2p_3p_4}$ is called a (spherical) tetrahedron 
with vertices $p_1,p_2,p_3,p_4$, 
edges $\overline{p_ip_j}$ ($i,j=1,2,3,4$, $i\ne j$),  
and faces 
$\overline{p_2p_3p_4}$, $\overline{p_1p_3p_4}$, $\overline{p_1p_2p_4}$, and $\overline{p_1p_2p_3}$.
We call two edges of $T$ \emph{adjacent} if they share a vertex and  
similarly a face and an edge if the face contains the edge.
Two edges which are not adjacent are called \emph{opposite}. 
Finally we use the notation $E_T:= \bigcup_{i,j=1}^4\overline{p_ip_j}$, the union of the edges, 
and $V_T:=\{p_1,p_2,p_3,p_4\}$, the set of the vertices.   
\end{definition} 

If $\Grp$ is a group acting on a set $B$ and if $A$ is a subset of $B$,
then we refer to the subgroup 
  \begin{equation}
  \label{stab}
    \stab_{\Grp}(A):=\{ \mathbf{g} \in \Grp \; | \; \mathbf{g}A = A \}
  \end{equation}
as the \emph{stabilizer} of $A$ in $\Grp$.
For $A,B$ subsets of $\Sph^3$ or $\Abold$ a finite collection of such subsets, we will set
  \begin{equation}
  \label{Gsym}
\begin{gathered} 
    \Gsym^A:=\stab_{{O(4)}} A 
= \{ \mathbf{g} \in O(4) \; | \; \mathbf{g}A = A \}, 
\qquad 
\Gsym^{A,B}:= \Gsym^{A} \cap \Gsym^{B}, 
\\ 
    \Gsym^{\Abold} := \{ \mathbf{g} \in O(4) \; | \; \{ \mathbf{g}A : A\in\Abold\} = \Abold \}. 
\end{gathered} 
  \end{equation}

\subsection*{Acknowledgments}
This article was motivated by questions asked by Antonio Ros in discussions with NK during a visit to 
the Math Institute of the University of Granada in June 2019. 
NK would like to thank also the Math Institute for their hospitality.

\section{Tessellations and basic spherical geometry}

\subsection*{Rotations along or about great circles} 
$\phantom{ab}$
\nopagebreak

Note that by \ref{D:refl},  
$C^\perp$ is the great circle furthest from a given great circle $C$ in $\Sph^3$.  
(Note that the points of $C^\perp$ are at distance $\pi/2$ in $\Sph^3$ from $C$ and any point of $\Sph^3\setminus C^\perp$ 
is at distance $<\pi/2$ from $C$). 
Equivalently $C^\perp$ is the set
of poles of great hemispheres with equator $C$;
therefore $C$ and $C^\perp$ are linked. 
The group 
$\Gsym^{C \cup C^\perp } $
contains 
$\Gsym^{C } = \Gsym^{C^\perp } $ 
(which includes arbitrary rotation or reflection in the two circles)  
and includes also   
orthogonal transformations exchanging 
$C$ with $C^\perp$.  

\begin{definition}[Rotations $\rot_C^\phi$, $\rot^C_\phi$ and Killing fields $K_{C}$, $K^{C}$] 
\label{l:D:rot} 
Given a great circle $C\subset\Sph^3$, $\phi \in \R$,
and an orientation chosen on the totally orthogonal circle $C^\perp$,
we define the following: 
\newline
(i) 
the rotation about $C$ by angle $\phi$ 
is the element $\rot_C^\phi$ of $SO(4)$ preserving $C$ pointwise 
and rotating the totally orthogonal circle $C^\perp$ along itself by angle $\phi$ 
(in accordance with its chosen orientation);  
\newline
(ii) 
the Killing field $K_{C}$ on $\Sph^3$ 
and the normalized Killing field $\Ktilde_{C}$ on $\Sph^3 \setminus C $ 
are given by 
$\left.\phantom{\frac12}K_{C}\right|_p := \left. \frac{\partial}{\partial\phi} \right|_{\phi=0} \rot_{C}^\phi(p)$
$\forall p\in\Sph^3$ 
and 
$\left.\Ktilde_{C}\right|_p \, := \,  
\frac{ \left.K_{C}\right|_p }{ \left| \left.K_{C}\right|_p \right| } 
$ 
$\forall p\in\Sph^3\setminus C$.   

Assuming now an orientation chosen on $C$ we define the following: 
\newline
(iii) 
the rotation along $C$ by angle $\phi$ is $\rot^C_\phi := \rot_{C^\perp}^\phi$;  
\newline
(iv) 
the Killing field $K^{C}:=K_{C^\perp}$ on $\Sph^3$ 
and the normalized Killing field $\Ktilde^{C}:=\Ktilde_{C^\perp}$ on $\Sph^3\setminus C^\perp$.  
\qed 
\end{definition}

Note that $\rot^C_\phi = \rot_{C^\perp}^\phi$ in the vicinity of $C$ resembles a translation along $C$ and in the vicinity of $C^\perp$ a rotation about $C^\perp$. 
Note also that $K_{C}$ is defined to be a rotational Killing field around $C$, 
vanishing on $C$ and equal to the unit velocity on ${C}^\perp$.  

\begin{lemma}[Orbits]  
\label{orbits} 
For $K^C$ as in \ref{l:D:rot}, the orbits of $K^C$ (that is its flowlines) are planar circles (in $\R^4$)  
and $\forall\pp\in C $ each orbit intersects the closed hemisphere $C^\perp \cone \pp$ exactly once.  
Moreover the intersection (when nontrivial) is orthogonal. 
\end{lemma}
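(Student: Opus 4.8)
The plan is to pass to the orthogonal splitting $\R^4=\Span(C)\oplus\Span(C^\perp)$, which is a genuine orthogonal direct sum of two $2$-planes because $C$ and $C^\perp$ are totally orthogonal great circles (Definition \ref{D:refl}). By Definition \ref{l:D:rot} the flow generated by $K^C=K_{C^\perp}$ is $\{\rot^C_\phi\}=\{\rot_{C^\perp}^\phi\}$, and $\rot_{C^\perp}^\phi$ fixes $\Span(C^\perp)$ pointwise while acting on $\Span(C)$ as the rotation by angle $\phi$, which I will denote $R_\phi$. Writing each $p\in\Sph^3$ as $p=u+w$ with $u:=\Pi_{\Span(C)}p$ and $w:=\Pi_{\Span(C^\perp)}p$, the orbit through $p$ is $\{R_\phi u+w:\phi\in\R\}$, that is the circle of radius $|u|$ swept by $R_\phi u$ inside the affine $2$-plane $w+\Span(C)$. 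This is a planar circle in $\R^4$, degenerating to the single point $w\in C^\perp$ precisely when $u=0$; so the first assertion is immediate.

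For the second assertion I would first describe the hemisphere. Since $\pp\in C\subset\Span(C)$ is a unit vector orthogonal to $\Span(C^\perp)$, the span $\Span(C^\perp\cup\{\pp\})$ is the orthogonal direct sum $\R\pp\oplus\Span(C^\perp)$, and parametrizing the geodesics from $\pp$ to $C^\perp$ (which leave $\pp$ orthogonally, so that their $\pp$-coordinate is $\cos t\ge0$) identifies $C^\perp\cone\pp$ with the closed half $\{a\pp+w':a\ge0,\ w'\in\Span(C^\perp),\ a^2+|w'|^2=1\}$. To intersect the orbit $\{R_\phi u+w\}$ with this set, note that $w\in\Span(C^\perp)$ while $R_\phi u\in\Span(C)$, and that $\Span(C)\cap\Span(C^\perp\cup\{\pp\})=\R\pp$; hence membership forces $R_\phi u\in\R\pp$, and the nonnegativity of the $\pp$-coordinate forces $R_\phi u=|u|\,\pp$. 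For $u\ne0$ this determines $\phi$ uniquely modulo $2\pi$ and produces the single intersection point $|u|\pp+w$, which lies in the interior of the hemisphere because its $\pp$-coordinate $|u|$ is strictly positive; for $u=0$ the orbit is the lone point $w\in C^\perp=\partial(C^\perp\cone\pp)$, which again meets the hemisphere exactly once. Thus every orbit meets $C^\perp\cone\pp$ exactly once.

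Finally, for orthogonality, at the intersection point $v_0=|u|\pp+w$ with $u\ne0$ the orbit's tangent is $\frac{d}{d\phi}(R_\phi u)$ at the relevant $\phi$, which lies in $\Span(C)$ and is orthogonal to $\pp$; being orthogonal both to $\pp$ and, as an element of $\Span(C)$, to all of $\Span(C^\perp)$, it is orthogonal to the whole three-space $\R\pp\oplus\Span(C^\perp)=\Span(C^\perp\cup\{\pp\})$, and in particular to the tangent plane $T_{v_0}(C^\perp\cone\pp)$ contained in it. Hence the intersection is orthogonal. I expect the only real subtlety — the step I would handle with the most care — to be the bookkeeping of the degenerate orbits that the clause ``when nontrivial'' is meant to exclude, namely the point-orbits lying on $C^\perp$ (there $K^C$ vanishes, so there is no tangent direction to compare, yet the single intersection still occurs, on $\partial(C^\perp\cone\pp)$); by contrast the extreme-but-regular orbit $C$ itself ($|u|=1$, $w=0$) meets the hemisphere at its single interior point $\pp$ and is already covered by the case $u\ne0$. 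Verifying that the count and the orthogonality conclusion hold verbatim across all these cases is what makes the lemma hold uniformly.
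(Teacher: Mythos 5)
Your proof is correct and follows essentially the same route as the paper, which simply declares the statement ``straightforward to check'' in $\R^4$ using the half-three-space $\{a\pp+w' : a\ge 0,\ w'\in\Span(C^\perp)\}$ and then restricts to $\Sph^3$. Your orthogonal splitting $\R^4=\Span(C)\oplus\Span(C^\perp)$ and the explicit orbit $\{R_\phi u+w\}$ are precisely the check the paper has in mind, carried out in full, including the correct treatment of the degenerate point-orbits on $C^\perp$.
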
 

\begin{proof} 
This is straightforward to check already in $\R^4$ with the 
hemisphere $C^\perp \cone \pp$ replaced by the half-three-plane containing $\pp$ and with boundary $\Span(C^\perp)$. 
By restricting then to $\Sph^3$ the result follows. 
\end{proof} 

This lemma allows us to define a projection which effectively identifies the space of orbits in discussion 
with a closed hemisphere: 

\begin{definition}[Projections by rotations]  
\label{Pi} 
For $C$ and $\pp$ as in \ref{orbits} 
we define the smooth map 
$\Pi^C_\pp :\Sph^3\to C^\perp \cone \pp$ by requiring $\Pi^C_\pp x$ to be the intersection of $C^\perp \cone \pp$ with the orbit of $K^C$ containing $x$,    
for any $x \in\Sph^3$.  
\end{definition}

\begin{definition}[Graphical sets]  
\label{graphical} 
A set $A\subset \Sph^3$ is called \emph{graphical with respect to $K^C$} (with $C$ as above) 
if each orbit of $K^C$ intersects $A$ at most once. 
If moreover $A$ is a submanifold and there are no orbits of $K^C$ which are tangent to $A$, 
then $A$ is called 
\emph{strongly graphical with respect to $K^C$}.   
\end{definition}

\subsection*{The geometry of totally orthogonal circles}
$\phantom{ab}$
\nopagebreak

We fix now some $C$ and $C^\perp$ as above, and orientations on both. 
We define $\forall \phi\in\R$ the points 
\begin{equation}
\label{points} 
\pp_{\phi} := \rot_{C^\perp}^{\phi}\,\pp_0\,\in \, C,
\qquad
\pp^{\phi} := \rot_{C}^{\phi}\,\pp^0\,\in \, C^{\perp},
\end{equation}
where $\pp_0,\pp^0$ are arbitrarily fixed points on $C$ and $C^{\perp}$ respectively. 
Using \ref{span} we further define $\forall \phi\in\R$ the great spheres 
\begin{equation}
\label{hemispheres} 
\Sigma^\phi :=  \,\Sph( C , \pp^\phi ) , 
\qquad  
\Sigma_\phi := \, \Sph( C^\perp , \pp_\phi),   
\end{equation}
and $\forall\phi,\phi'\in\R$ 
the great circles 
\begin{equation}
\label{circles}  
C_\phi^{\phi'} := \Sph( \, \pp_\phi, \pp^{\phi'} \, ).  
\end{equation}

\begin{definition}[Coordinates on $\R^4$]  
\label{D:coordinates} 
Given $C$ as above and points as in \ref{points}, 
we define coordinates $(x^1,x^2,x^3,x^4)$ on $\R^4\supset\Sph^3$ by choosing 
$$
\pp_0 :=(1,0,0,0), \qquad \pp_{\pi/2} :=(0,1,0,0), \qquad \pp^0 :=(0,0,1,0), \qquad \pp^{\pi/2} :=(0,0,0,1). 
$$
\end{definition} 

\begin{lemma}[Basic geometry related to $C$ and $C^\perp$]  
\label{L:obs} 
The following hold $\forall\phi,\phi', \phi_1,\phi'_1,\phi_2,\phi'_2  \in\R$.  
\\
(i) $ \pp_{\phi+\pi} = - \pp_\phi $  and 
$ \pp^{\phi+\pi} = - \pp^\phi $.   
Similarly 
$ \Sigma_{\phi+\pi} = \Sigma_\phi $  and 
$ \Sigma^{\phi+\pi} = \Sigma^\phi $.   
\\
(ii) 
$C_\phi^{\phi'} \cap C = \{ \pp_\phi , \pp_{\phi+\pi} \} $ 
and 
$C_\phi^{\phi'} \cap C^\perp  = \{ \pp^{\phi'} , \pp^{\phi'+\pi} \} $ 
with orthogonal intersections. 
Moreover 
$ 
C_\phi^{\phi'} 
=       
\overline{ \pp_\phi  \pp^{\phi'} } \, \cup \,      
\overline{ \pp^{\phi'} \pp_{\phi+\pi} } \, \cup \,      
\overline{ \pp_{\phi+\pi} \pp^{\phi'+\pi} } \, \cup \,      
\overline{ \pp^{\phi'+\pi} \pp_\phi  } 
. 
$ 
\\ 
(iii) 
$C \cone \pp^\phi$  and $C^\perp \cone \pp_\phi$  
are closed great hemispheres 
with boundary $C$ and $C^\perp$ and poles $\pp^\phi$ and $\pp_\phi$ respectively. 
\\ 
(iv) 
$\Sigma_\phi = ( C^\perp \cone \pp_\phi ) \cup ( C^\perp \cone \pp_{\phi+\pi} )$  
and 
$\Sigma^\phi = ( C \cone \pp^\phi ) \cup ( C \cone \pp^{\phi+\pi} )$.  
\\ 
(v) 
$\Sigma^\phi \cap C^\perp = \{ \pp^\phi , \pp^{\phi+\pi} \}$ 
and 
$\Sigma_\phi \cap C = \{ \pp_\phi, \pp_{\phi+\pi}\}$ 
with orthogonal intersections. 
\\ 
(vi) 
$C_\phi^{\phi'} = \Sigma_\phi \cap \Sigma^{\phi'}$  
with orthogonal intersection. 
\\
(vii) 
$\left( { C_\phi^{\phi'} } \right)^\perp = C_{\phi+\pi/2}^{\phi'+\pi/2}$. 
\\ 
(viii)  
$\Sigma^\phi \cap \Sigma^{\phi'} = C$ unless $\phi=\phi' \pmod \pi$ 
in which case 
$\Sigma^\phi = \Sigma^{\phi'}$.   
Similarly 
$\Sigma_\phi \cap \Sigma_{\phi'} = C^\perp$ unless $\phi=\phi' \pmod \pi$ 
in which case 
$\Sigma_\phi = \Sigma_{\phi'}$.   
In both cases the intersection angle is $\phi'-\phi \pmod \pi$. 
\\
(ix) 
$C_{\phi_1}^{\phi_1'} \cap C_{\phi_2}^{\phi_2'} = \emptyset$ 
unless 
$\phi_1=\phi_2 \pmod \pi$ 
or
$\phi'_1=\phi'_2 \pmod \pi$. 
If both conditions hold then  
$C_{\phi_1}^{\phi_1'} = C_{\phi_2}^{\phi_2'}$.  
If only the first condition holds then 
$C_{\phi_1}^{\phi_1'} \cap C_{\phi_2}^{\phi_2'} = 
\{ \pp_{\phi_1} , \pp_{\phi_1+\pi} \}$ 
with intersection angle equal to $\phi'_2-\phi'_1 \pmod \pi$. 
If only the second condition holds then 
$C_{\phi_1}^{\phi_1'} \cap C_{\phi_2}^{\phi_2'} = 
\{ \pp^{\phi_2} , \pp^{\phi_2+\pi} \}$ 
with intersection angle equal to $\phi_2-\phi_1 \pmod \pi$. 
\end{lemma}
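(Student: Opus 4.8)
The plan is to reduce the entire lemma to elementary linear algebra in $\R^4$ by working in the coordinates of Definition \ref{D:coordinates}. First I would record the coordinate expressions for the basic points. Since $\pp_\phi = \rot_{C^\perp}^\phi\,\pp_0$ rotates $\pp_0 = e_1$ within $\Span(C)$ (the $(x^1,x^2)$-plane) by the angle $\phi$ while $C^\perp$ is held pointwise, and $\pp^\phi = \rot_{C}^\phi\,\pp^0$ rotates $\pp^0 = e_3$ within $\Span(C^\perp)$ (the $(x^3,x^4)$-plane), one gets
\[
\pp_\phi = (\cos\phi,\sin\phi,0,0), \qquad \pp^\phi = (0,0,\cos\phi,\sin\phi).
\]
In particular $C$ is the unit circle in the $(x^1,x^2)$-plane, $C^\perp$ the unit circle in the $(x^3,x^4)$-plane, and $\pp_\phi \perp \pp^{\phi'}$ for all $\phi,\phi'$ since the two vectors lie in the mutually orthogonal blocks $\Span\{e_1,e_2\}$ and $\Span\{e_3,e_4\}$. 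Part (i) is then immediate from $(\cos(\phi+\pi),\sin(\phi+\pi)) = -(\cos\phi,\sin\phi)$, the statements for $\Sigma_\phi$ and $\Sigma^\phi$ following because negating a spanning vector does not change a span.

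For the incidence and orthogonality statements (ii)--(vii) I would translate each great sphere or circle into the subspace it spans, namely $\Sigma^\phi \leftrightarrow \Span\{e_1,e_2,\pp^\phi\}$, $\Sigma_\phi \leftrightarrow \Span\{\pp_\phi,e_3,e_4\}$, and $C_\phi^{\phi'} \leftrightarrow \Span\{\pp_\phi,\pp^{\phi'}\}$, and then compute the relevant intersections and orthogonal complements blockwise. For instance (vi) follows from $\Span\{\pp_\phi,e_3,e_4\} \cap \Span\{e_1,e_2,\pp^{\phi'}\} = \Span\{\pp_\phi,\pp^{\phi'}\}$, and (vii) from $\left(\Span\{\pp_\phi,\pp^{\phi'}\}\right)^\perp = \Span\{\pp_{\phi+\pi/2},\pp^{\phi'+\pi/2}\}$ (then intersect with $\Sph^3$ via Definition \ref{D:refl}). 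The cone descriptions (iii)--(iv) hold because $\pp^\phi$ (resp.\ $\pp_\phi$) is a pole of $\Sigma^\phi$ (resp.\ $\Sigma_\phi$) at distance $\pi/2$ from the equator $C$ (resp.\ $C^\perp$), so the union of minimizing geodesics from the equator to that pole sweeps out exactly the corresponding closed hemisphere. Every claimed orthogonality reduces to observing that a tangent or normal vector in the $(x^1,x^2)$-block is perpendicular to one in the $(x^3,x^4)$-block; concretely, parametrizing $C_\phi^{\phi'}$ by $\gamma(t) = \cos t\,\pp_\phi + \sin t\,\pp^{\phi'}$ gives tangent $\pp^{\phi'}$ at $\pp_\phi$ and tangent $-\pp_\phi$ at $\pp^{\phi'}$, from which the orthogonal meetings with $C$ and $C^\perp$ follow at once.

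Parts (viii) and (ix) concern intersection multiplicity and angle, and here the only genuine bookkeeping is the dichotomy modulo $\pi$ coming from the antipodal identification $\pp^{\phi'} = \pm\pp^\phi \iff \phi \equiv \phi' \pmod\pi$ (and similarly for $\pp_\phi$). For (ix) I would compute $V_1 \cap V_2$ with $V_i = \Span\{\pp_{\phi_i},\pp^{\phi_i'}\}$ by matching the $(x^1,x^2)$- and $(x^3,x^4)$-blocks of a common vector separately: the $(x^1,x^2)$-block forces its $C$-coefficient to vanish unless $\phi_1 \equiv \phi_2 \pmod\pi$, and the $(x^3,x^4)$-block forces its $C^\perp$-coefficient to vanish unless $\phi_1' \equiv \phi_2' \pmod\pi$. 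This yields the four cases: empty intersection, equality of the circles, or a single antipodal pair lying on $C$ or on $C^\perp$. The intersection angles are read off from the tangent vectors above: at a common point on $C$ the two tangents are $\pm\pp^{\phi_1'}$ and $\pm\pp^{\phi_2'}$, whose angle is $\phi_2' - \phi_1' \pmod\pi$, while at a common point on $C^\perp$ they are $\pm\pp_{\phi_1}$ and $\pm\pp_{\phi_2}$, with angle $\phi_2 - \phi_1 \pmod\pi$; part (viii) is analogous, the dihedral angle along $C$ (resp.\ $C^\perp$) equaling the angle between $\pp^\phi$ and $\pp^{\phi'}$ (resp.\ $\pp_\phi$ and $\pp_{\phi'}$). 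I do not expect a serious obstacle: the content is entirely computational, and the only place demanding care is keeping the antipodal (mod $\pi$) identifications and the resulting angle conventions consistent across the many sub-cases.
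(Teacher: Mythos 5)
Your proposal is correct and follows exactly the route the paper takes: its proof of Lemma \ref{L:obs} consists of the single remark that all statements are straightforward to verify in the coordinates of Definition \ref{D:coordinates}, and your writeup is precisely that verification carried out (with the correct coordinate expressions $\pp_\phi=(\cos\phi,\sin\phi,0,0)$, $\pp^\phi=(0,0,\cos\phi,\sin\phi)$ and the blockwise linear algebra). No issues.
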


\begin{proof} 
It is straightforward to verify all these statements by using the coordinates defined in 
\ref{D:coordinates}.  
\end{proof} 

\subsection*{Some tessellations by tetrahedra}   
$\phantom{ab}$
\nopagebreak

We fix $m,k\ge 2$ and we 
introduce the notation 
\begin{equation}
\label{qpoints} 
\begin{aligned}
s_{i} :=& {i\frac\pi{m}} \in \R , 
\qquad &
s^{j} :=& {j\frac\pi{k}} \in \R ,  
\\
\ptt_{i} :=& \pp_{ s_{i} } \in C , 
\qquad & 
\ptt^{j} :=& \pp^{ s^{j} } \in C^\perp ,  
\end{aligned} 
\qquad 
\forall i,j\in\frac12\Z. 
\end{equation}

\begin{remark}[Notation comparison with \cite{LindexI} ]  
\label{R:0} 
Note that our notation is consistent with the one in \cite{LindexI} but we use $\ptt$'s instead of $\pqq$'s, 
which are defined so that $\ptt_i=\pqq_{i+1/2}$ and $\ptt^j=\pqq^{j+1/2}$.  
\qed 
\end{remark} 

We define now $\forall i,j \in \frac{1}{2}\Z$ tetrahedra 
$\Om_i^j= \Om_i^j[m,k]$, $\Om_{i\pm}^{j\pm}= \Om_{i\pm}^{j\pm}[m,k]$, $\Om_{i\pm}^j= \Om_{i\pm}^j[m,k]$, 
and $\Om_i^{j\pm}= \Om_i^{j\pm}[m,k]$, by 
\begin{equation}
\label{Om} 
\begin{aligned}
\Om_i^j :=& \overline{\, \ptt_{i-1/2} \ptt_{i+1/2} \ptt^{j-1/2} \ptt^{j+1/2} \, },  \qquad & 
\Om_{i\pm}^{j\pm} :=& \overline{\, \ptt_{i} \ptt_{i\pm1/2} \ptt^{j} \ptt^{j\pm1/2} \, },  
\\
\Om_{i\pm}^j :=& \overline{\, \ptt_{i} \ptt_{i\pm 1/2} \ptt^{j-1/2} \ptt^{j+1/2} \, },  \qquad & 
\Om_i^{j\pm} :=& \overline{\, \ptt_{i-1/2} \ptt_{i+1/2} \ptt^{j} \ptt^{j\pm1/2} \, },  
\end{aligned}
\end{equation} 
and groups 
$\Grp_{\phantom{s} }^{\Om_i^j} = \Grp_{\phantom{s} }^{\Om_i^j [m,k]}$ and  
$\HGrp_{\phantom{s} }^{\Om_i^j} = \HGrp_{\phantom{s} }^{\Om_i^j [m,k]}$ by 
\begin{equation} 
\label{DgrpO} 
\Grp_{\phantom{s} }^{\Om_i^j} :=
  \{
    \, \refl_{\Sph^3}, 
    \refl_{\Sigma_{i\pi/m}} , 
    \refl_{\Sigma^{j\pi/k}} , 
    \refl_{C_{i\pi/m}^{j\pi/k} } \,
  \} 
  \simeq
  \Z_2\times \Z_2,  
\qquad 
\HGrp_{\phantom{s} }^{\Om_i^j} :=
  \{
    \, \refl_{\Sph^3}, 
    \refl_{C_{i\pi/m}^{j\pi/k} } \,
  \} 
  \simeq
  \Z_2,  
\end{equation} 
where $\refl_{\Sph^3}$ is the identity map on $\Sph^3$.  
We will call 
$\forall i,j\in\frac12\Z$ the circle
$\Sph( \ptt_{i},\ptt^{j} ) = {C_{i\pi/m}^{j\pi/k} } $ the \emph{axis} of $\Om_i^j$.

\begin{definition}[Tessellations] 
\label{tessellations} 
We define $\quad\Ombold:=\{\Om_i^j\}_{i,j\in\Z}$, $\quad\Omboldu:=\{\Om_i^j\}_{i,j\in\frac12+\Z}$, 
$\quad\Ombold_e:=\{\Om_i^j\}_{i+j\in2\Z,i\in\Z},\quad$ and $\quad\Ombold_o:=\{\Om_i^j\}_{i+j\in2\Z+1,i\in\Z}$. 
\end{definition}

Note that 
$\Ombold$ and $\Omboldu$ provide tessellations of $\Sph^3$ with $4km$ tetrahedra each.   
$\Ombold_e$ and $\Ombold_o$ (with $2km$ tetrahedra each) form a subdivision of $\Ombold$.  

\begin{lemma}[Properties of $\Om_i^j$] 
\label{Om:p} 
$\forall i,j\in\frac12\Z$ the spherical tetrahedron $\Om_i^j$ is compact and convex 
(that is $\overline{xy}\subset\Om_i^j$ $\forall x,y\in \Om_i^j$)    
and satisfies the following. 
\begin{enumerate}[(i)]
\item 
Its faces are contained in the spheres 
$\Sigma^{ (j\pm1/2)\pi /k }$ and $\Sigma_{  (i\pm1/2)\pi /m } $. 
\item 
The edge on $C$ has length $\pi/m$, the edge on $C^\perp$ has length $\pi/k$, and the remaining four edges have length $\pi/2$. 
Similarly the dihedral angle on the edge on $C$ is $\pi/k$, 
on $C^\perp$ is $\pi/m$, 
and on the other four edges $\pi/2$. 
\item 
Its symmetry group satisfies the following. 
\begin{enumerate}[(a)]
\item 
If $m\neq k$, then $\Gsym^{\Om_i^j}=\Grp_{\phantom{s} }^{\Om_i^j}$.  
\item 
If $m= k>2$, then $\Gsym^{\Om_i^j}$ is isomorphic 
to the dihedral group $D_4$ of order $8$ (the group of symmetries of a square)
and contains 
$\Grp_{\phantom{s} }^{\Om_i^j}$ as a subgroup of index $2$.  
\item 
If $m= k=2$, then $\Om_i^j$ is a regular tetrahedron and 
$\Gsym^{\Om_i^j}$ is isomorphic to the symmetric group $S_4$ on four elements 
and contains 
$\Grp_{\phantom{s} }^{\Om_i^j}$ as a subgroup of index $6$.  
\end{enumerate} 
\end{enumerate} 
\end{lemma}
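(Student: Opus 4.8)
The plan is to establish the convexity and compactness assertion and items (i)--(ii) by direct computation in the coordinates of \ref{D:coordinates} together with \ref{L:obs}, and to reduce the symmetry statement (iii) to an elementary analysis of the six pairwise distances among the four vertices $V_T=\{\ptt_{i-1/2},\ptt_{i+1/2},\ptt^{j-1/2},\ptt^{j+1/2}\}$. For compactness and convexity I would first observe that $V_T$ lies in an open hemisphere: the two $C$-vertices subtend an angle $\pi/m<\pi$ inside $\Span(C)$ and the two $C^\perp$-vertices an angle $\pi/k<\pi$ inside $\Span(C^\perp)$, so the convex polyhedral cone $\mathcal{C}\subset\R^4$ they positively generate is salient. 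An induction on \ref{D:cone} then identifies the iterated cone $\Om_i^j$ with the radial projection $\{v/\abs{v}:v\in\mathcal{C}\setminus\{0\}\}$ of $\mathcal{C}$; compactness is immediate, and convexity follows because for $x,y\in\Om_i^j$ the Euclidean segment $[x,y]$ lies in $\mathcal{C}$ and projects radially onto the minimizing geodesic $\overline{xy}$, which therefore lies in $\Om_i^j$.

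For (i) each face omits exactly one vertex: omitting a $C$-vertex leaves one $C$-vertex and both $C^\perp$-vertices, spanning $\Span(C^\perp,\ptt_{i\pm1/2})$, so the face lies in $\Sigma_{(i\pm1/2)\pi/m}$; omitting a $C^\perp$-vertex leaves both $C$-vertices and one $C^\perp$-vertex, so the face lies in $\Sigma^{(j\pm1/2)\pi/k}$. For (ii) the edge lengths are immediate, the edges on $C$ and $C^\perp$ having lengths $\pi/m$ and $\pi/k$ and the four mixed edges length $\pi/2$ since $C$ and $C^\perp$ are totally orthogonal. The dihedral angles then follow from \ref{L:obs}: along the $C$-edge the two faces lie in $\Sigma^{(j\pm1/2)\pi/k}$ and meet at angle $\pi/k$ by \ref{L:obs}(viii); along the $C^\perp$-edge they lie in $\Sigma_{(i\pm1/2)\pi/m}$ and meet at angle $\pi/m$; along each mixed edge one face lies in some $\Sigma_\bullet$ and the other in some $\Sigma^\bullet$, which meet orthogonally by \ref{L:obs}(vi).

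The substance is (iii), which I would organize around the action of $\Gsym^{\Om_i^j}$ on $V_T$. Since an element of $O(4)$ preserving the convex spherical polytope $\Om_i^j$ must permute its extreme points, restriction to $V_T$ gives a homomorphism $\Gsym^{\Om_i^j}\to\operatorname{Sym}(V_T)$, and it is injective because $V_T$ spans $\R^4$, so any orthogonal map fixing all four vertices is the identity. Conversely, writing $\langle p,q\rangle=\cos d(p,q)$ for unit vectors $p,q$, any permutation $\sigma$ of $V_T$ preserving all pairwise distances extends to a unique $g\in O(4)$: the linear map sending each vertex $p$ to $\sigma(p)$ preserves inner products on the basis $V_T$, hence is orthogonal, and it carries $\Om_i^j$ to itself. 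Thus $\Gsym^{\Om_i^j}$ is isomorphic to the subgroup of $\operatorname{Sym}(V_T)$ preserving the pairwise distances, and it only remains to read these off. Labelling the $C$-vertices $1,2$ and the $C^\perp$-vertices $3,4$, one has $d(1,2)=\pi/m$, $d(3,4)=\pi/k$, and $d=\pi/2$ on the four mixed pairs.

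If $m\neq k$ then at least one of $\pi/m,\pi/k$ differs both from $\pi/2$ and from the other, so one of $\{1,2\},\{3,4\}$ is the unique pair realizing its distance and must be preserved, hence so is the other; the distance-preserving permutations are then exactly $\{e,(12),(34),(12)(34)\}\simeq\Z_2\times\Z_2=\Grp^{\Om_i^j}$, giving (a). If $m=k>2$ the common value $\pi/m=\pi/k<\pi/2$ is realized precisely on $\{1,2\}$ and $\{3,4\}$, so the distance-preserving permutations are the automorphisms of the matching $\{\{1,2\},\{3,4\}\}$, namely $\Z_2\wr\Z_2\simeq D_4$ of order $8$, containing the index-$2$ subgroup $\Grp^{\Om_i^j}$ that fixes each pair setwise, giving (b). If $m=k=2$ every pairwise distance is $\pi/2$, so $V_T$ is an orthonormal basis and $\Om_i^j$ is the regular spherical simplex it spans; every permutation preserves distances, so $\Gsym^{\Om_i^j}\simeq S_4$ of order $24$, containing $\Grp^{\Om_i^j}$ with index $6$, giving (c). The main obstacle is the passage in (iii) between orthogonal symmetries and vertex permutations --- justifying both that a symmetry must permute the extreme points $V_T$, which is where the convexity above is used, and that preservation of the pairwise distances suffices to reconstruct an element of $O(4)$; once this identification is in place the three cases are a routine inspection of the distance matrix.
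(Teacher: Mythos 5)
Your proof is correct, and it follows the route the paper itself indicates: the paper's entire proof is the one-line assertion that everything is ``straightforward to check by using the definitions,'' and your argument is exactly that direct verification, carried out in full. The only genuinely nontrivial organizing step you add --- identifying $\Gsym^{\Om_i^j}$ with the group of distance-preserving permutations of $V_{\Om_i^j}$ via convexity (symmetries permute extreme rays) and the Gram-matrix extension, then reading off the three cases from the distance matrix $(\pi/m,\pi/k,\pi/2)$ --- is sound and supplies the details the authors omit.
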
 

\begin{proof}
It is straightforward to check all these statements by using the definitions.
\end{proof}

\subsection*{Rotations of tetrahedra} 
$\phantom{ab}$
\nopagebreak

$\forall i,j\in\Z$ we define 
\begin{equation} 
\label{E:partialpm} 
\begin{aligned} 
\partial_+ \Om_i^j := & \overline{ \, \ptt_{ i-1/2 } \ptt^{ j-1/2 } \ptt^{ j+1/2 } \, } \cup \overline{ \, \ptt_{ i+1/2 } \ptt^{ j-1/2 } \ptt^{ j+1/2 } \, } 
\!\! & \!\! = & \, 
\{ \ptt_{ i-1/2 } , \ptt_{ i+1/2 } \} \cone \overline{ \, \ptt^{ j-1/2 } \ptt^{ j+1/2 } \, },  
\\ 
\partial_- \Om_i^j := & \overline{ \, \ptt_{ i-1/2 } \ptt_{ i+1/2 } \ptt^{ j-1/2 } \, } \cup \overline{ \, \ptt_{ i-1/2 } \ptt_{ i+1/2 } \ptt^{ j+1/2 } \, }    
\!\! & \!\! = & \, 
\overline{ \, \ptt_{ i-1/2 } \ptt_{ i+1/2 } \, } \cone \{ \ptt^{ j-1/2 } ,  \ptt^{ j+1/2 } \},   
\end{aligned} 
\end{equation} 
so that 
\begin{equation} 
\label{E:partialpm2} 
\partial\Om_i^j= \partial_+ \Om_i^j \cup \partial_- \Om_i^j 
\qquad \text{ and } \qquad 
Q_i^j= \partial_+ \Om_i^j \cap \partial_- \Om_i^j,  
\end{equation} 
where 
$Q_i^j\subset \partial \Om_i^j$ is the spherical quadrilateral given by 
\begin{equation} 
\label{D:Q} 
\begin{aligned} 
Q_i^j := & 
\overline{ \ptt_{ i -1/2 } \ptt^{ j -1/2  } }  
\cup 
\overline{ \ptt^{ j -1/2 } \ptt_{ i+1/2  } }  
\cup 
\overline{ \ptt_{ i+1/2 } \ptt^{ j+1/2  } }  
\cup 
\overline{ \ptt^{ j+1/2 } \ptt_{ i-1/2  } }  
\\ 
= & \{ \ptt_{ i-1/2 } , \ptt_{ i+1/2 } \} \cone \{ \ptt^{ j-1/2 } , \ptt^{ j+1/2 } \}    
=E_{\Omega_i^j} \backslash (C \cup C^\perp) . 
\end{aligned} 
\end{equation} 
We also use the notation for the set of vertices of $Q_i^j$ (or $\Om_i^j$) 
\begin{equation} 
\label{D:Qv} 
\Qv_i^j = \Qv_i^j [m,k] := V_{\Om_i^j} = 
\{ 
\ptt_{ i-1/2 } , 
\ptt_{ i+1/2  } ,  
\ptt^{ j-1/2  } ,  
\ptt^{ j+1/2 } 
\} 
. 
\end{equation}

\begin{lemma}[{$\Om_i^j$} and rotations along its axis] 
\label{L-alex} 
The following are true $\forall i,j\in\Z$ and any orbit $O$ of 
$K_{\widetilde{C}}$, 
where
$\widetilde{C} := ({C_{i\pi/m}^{j\pi/k} })^\perp =  C_{i\pi/m+\pi/2}^{j\pi/k+\pi/2} $. 
\\ (i) $( \rot^t_{\widetilde{C}} \Om_i^j ) \cap \Om_i^j= \emptyset$ for $t\in (-3\pi/2, -\pi/2)\cup  (\pi/2,3\pi/2)$. 
Moreover either 
$( \rot^{\pm\pi/2}_{\widetilde{C}} \Om_i^j ) \cap \Om_i^j   =\{ \ptt_{i} \}$ or  
$( \rot^{\pm\pi/2}_{\widetilde{C}} \Om_i^j ) \cap \Om_i^j   =\{ \ptt^{j} \}$
(depending on the orientation of ${C_{i\pi/m}^{j\pi/k} } $ and the sign).  
\\ (ii) 
For each $\Om_{i\pm}^{j\pm}$ either $O \cap \Om_{i\pm}^{j\pm} = O \cap \Om_{i}^{j} $  or $O \cap \Om_{i\pm}^{j\pm} = \emptyset$.  
\\ (iii) 
If $O\cap \Om_i^j \ne \emptyset$,  
then (recall \ref{E:partialpm})   
$O\cap \partial_{\pm} \Om_i^j = \{x_{\pm}\} $ for some $x_{\pm} \in \partial_{\pm} \Omega_i^j$.  
Moreover $O\cap\Om_i^j$ is 
a connected arc (possibly a single point) whose endpoints are $x_+$ and $x_-$.  
\\ (iv) 
If $O\cap Q_i^j\ne \emptyset$, then $x_+=x_-\in Q_i^j$  
and 
$O\cap\Om_i^j=\{x_+\}$.   
\\ (v) 
If $O\cap ( \Om_i^j \setminus \Qv_i^j ) \ne \emptyset$,  
then $O$ intersects each face of $\Om_i^j$ containing $x_+$ ($x_-$) transversely. 
\\ (vi)
$\Pi_i^j(\Om_i^j) \subset C_{i\pi/m+\pi/2}^{j\pi/k+\pi/2} \cone \pp_{i\pi/m} $ 
is homeomorphic to a closed disc with boundary 
$\Pi_i^j(Q_i^j) $, 
where
$\Pi_i^j:= \Pi^{C_{i\pi/m}^{j\pi/k}}_{\pp_{i\pi/m} } $  
is defined as in \ref{Pi}  
(recall also $\pp_{i\pi/m}  = \ptt_{i}$).  
\end{lemma}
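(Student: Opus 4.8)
The plan is to reduce everything to the case $i=j=0$ and then to read off each of the six parts from an explicit description of $\Om_0^0$ adapted to the rotation, using the coordinates of \ref{D:coordinates}. First I would apply the isometry $g:=\rot_{C^\perp}^{i\pi/m}\circ\rot_C^{j\pi/k}\in SO(4)$, which sends $\ptt_r\mapsto\ptt_{r+i}$ and $\ptt^s\mapsto\ptt^{s+j}$, hence carries $\Om_0^0$ to $\Om_i^j$, the axis $C_0^0$ to $C_i^j$, and $\widetilde C=(C_0^0)^\perp$ to $(C_i^j)^\perp$; it therefore conjugates $K_{\widetilde C}$ and $\rot^t_{\widetilde C}$ to the corresponding objects for $\Om_i^j$, so it suffices to treat $\Om_0^0$. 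Writing $\alpha:=\pi/2m$ and $\beta:=\pi/2k$, the vertices are $\ptt_{\pm1/2}=(\cos\alpha,\pm\sin\alpha,0,0)$ and $\ptt^{\pm1/2}=(0,0,\cos\beta,\pm\sin\beta)$, and using \ref{Om:p}(i) I would verify that $\Om_0^0$ is exactly the spherical simplex cut out by $-x^1\sin\alpha+x^2\cos\alpha\le0$, $x^1\sin\alpha+x^2\cos\alpha\ge0$, $-x^3\sin\beta+x^4\cos\beta\le0$, $x^3\sin\beta+x^4\cos\beta\ge0$, whose bounding great spheres are $\Sigma_{\pm\alpha}$ (carrying $\partial_+\Om_0^0$, recall \ref{E:partialpm}) and $\Sigma^{\pm\beta}$ (carrying $\partial_-\Om_0^0$).

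Next I would introduce polar coordinates $x^1=\rho\cos\theta$, $x^3=\rho\sin\theta$ in the axis plane while keeping $x^2,x^4$: then $\widetilde C=\{\rho=0\}$ is fixed, $\rot^t_{\widetilde C}$ acts by $\theta\mapsto\theta+t$ with $\rho,x^2,x^4$ fixed, and the orbits of $K_{\widetilde C}$ are precisely the circles of constant $(\rho,x^2,x^4)$. The four inequalities become $\cos\theta\ge0$, $\sin\theta\ge0$, $|x^2|\le\rho\cos\theta\tan\alpha$, $|x^4|\le\rho\sin\theta\tan\beta$; in particular $\Om_0^0\subset\{0\le\theta\le\pi/2\}$, it is disjoint from $\widetilde C$, and along any orbit $O$ the set $O\cap\Om_0^0$ is the closed $\theta$-interval $[\theta_-,\theta_+]$ with $\theta_-=\arcsin(|x^4|/\rho\tan\beta)$ and $\theta_+=\arccos(|x^2|/\rho\tan\alpha)$, empty unless both arguments are at most $1$ and $\theta_-\le\theta_+$. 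This interval picture already gives (iii): $O\cap\Om_0^0$ is a connected arc, its endpoint at $\theta_+$ lies on $\Sigma_{\pm\alpha}=\partial_+\Om_0^0$ and its endpoint at $\theta_-$ on $\Sigma^{\pm\beta}=\partial_-\Om_0^0$, each met exactly once because $\cos$ and $\sin$ are monotone on $[0,\pi/2]$.

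For (i) the containment $\rot^t_{\widetilde C}\Om_0^0\subset\{t\le\theta\le t+\pi/2\}$ reduces matters to comparing the quarter-arcs $[0,\pi/2]$ and $[t,t+\pi/2]$ on the axis circle $\R/2\pi\Z$: they are disjoint precisely for $t\in(\pi/2,3\pi/2)$ (equivalently $t\in(-3\pi/2,-\pi/2)$), and at $t=\pm\pi/2$ they meet only at $\theta=\pi/2$ (resp.\ $\theta=0$), where the inequalities force $x^2=0$ (resp.\ $x^4=0$) and, after imposing membership in the rotated copy, collapse the remaining freedom to the single axis point $\ptt^0$ (resp.\ $\ptt_0$); this is the stated sign/orientation dichotomy. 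For (ii) the decisive observation is that $x^2$ and $x^4$ are constant along each orbit, while the four $\Om_{0\pm}^{0\pm}$ are cut from $\Om_0^0$ by the sign of $x^2$ (the sphere $\Sigma_0$) and the sign of $x^4$ (the sphere $\Sigma^0$); hence $O$ lies in the closure of a single sign-quadrant and $O\cap\Om_{0\pm}^{0\pm}$ is either all of $O\cap\Om_0^0$ or empty. Part (iv) is immediate from $Q_0^0=\partial_+\Om_0^0\cap\partial_-\Om_0^0$: meeting $Q_0^0$ forces $\theta_-=\theta_+$, collapsing the arc to $x_+=x_-$. For (v) I would differentiate the four functionals along an orbit; the $\theta$-derivatives are $\pm\rho\sin\theta\sin\alpha$ on the two $\partial_+$ faces and $\pm\rho\cos\theta\sin\beta$ on the two $\partial_-$ faces, so transversality can fail only at $\theta=0$ (for $\partial_+$) or $\theta=\pi/2$ (for $\partial_-$); but $\theta_+=0$ or $\theta_-=\pi/2$ forces the endpoint to be one of the vertices $\ptt_{\pm1/2}$, $\ptt^{\pm1/2}$, whose orbit meets $\Om_0^0$ in that vertex alone, and such orbits are excluded by the hypothesis $O\cap(\Om_0^0\setminus\Qv_0^0)\ne\emptyset$.

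Finally, for (vi) the projection $\Pi_0^0=\Pi^{C_0^0}_{\ptt_0}$ flows along $K_{\widetilde C}$-orbits to the hemisphere $\widetilde C\cone\ptt_0=\{\theta=0,\ x^1\ge0\}$ (it collapses each arc $[\theta_-,\theta_+]$ to one point), and by \ref{orbits} each orbit meets this hemisphere once. By (iii) every orbit meets the topological disc $\partial_+\Om_0^0$ (two faces glued along the $C^\perp$-edge, with $\partial(\partial_+\Om_0^0)=Q_0^0$) exactly once, so $\Pi_0^0$ restricted to $\partial_+\Om_0^0$ is a continuous injection of a compact disc, hence a homeomorphism onto its image; since every orbit through $\Om_0^0$ already meets $\partial_+\Om_0^0$, that image equals $\Pi_0^0(\Om_0^0)$, a closed disc with boundary $\Pi_0^0(Q_0^0)$. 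I expect the main obstacle to be not any single deep idea but the careful stratum-by-stratum bookkeeping in (iii)--(v)---endpoints landing on the walls $x^2=0$ or $x^4=0$, on the edges of $Q_0^0$, or at vertices---together with the circle-arithmetic in (i); all of this is elementary once the $\theta$-interval description above is in place.
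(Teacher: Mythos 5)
Your proof is correct, but it takes a genuinely different route from the paper's. You linearize the whole problem: after reducing to $i=j=0$ (as the paper also does, by symmetry), you identify $\Om_0^0$ with the intersection of $\Sph^3$ with the convex cone $\{\,|x^2|\le x^1\tan\alpha,\ |x^4|\le x^3\tan\beta\,\}$ --- which is indeed exactly the positive span of the four vertex rays --- and the adapted polar angle $\theta$ then turns each orbit's intersection with $\Om_0^0$ into the explicit interval $[\theta_-,\theta_+]$, so that (i)--(v) reduce to monotonicity of $\sin$ and $\cos$ on $[0,\pi/2]$ and (vi) to injectivity of $\Pi_0^0$ on the disc $\partial_+\Om_0^0$. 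The paper instead argues synthetically: (i) via a pair of orthogonal hemispheres whose supporting hyperplanes separate $\R^4$ into convex components; (ii) via invariance of the bisecting spheres $\Sigma_0,\Sigma^0$ under the rotation family; transversality via the fact that the Killing field induces on the minimal great spheres $\Sigma_{\pi/2m},\Sigma^{\pi/2k}$ a nontrivial Jacobi field which is a first harmonic with a single great circle as nodal set; the at-most-one-point-per-face count via planar orbit circles meeting great two-spheres at most twice together with the reflections $\refl_{\Sigma^{\pi/2}},\refl_{\Sigma_{\pi/2}}$ reversing orbits; and (vi) via the Jordan curve theorem and degree theory applied to $\partial_-\Om_0^0$. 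Your computational route buys explicitness and self-containedness: the formulas $\theta_+=\arccos\bigl(|x^2|/(\rho\tan\alpha)\bigr)$ and $\theta_-=\arcsin\bigl(|x^4|/(\rho\tan\beta)\bigr)$ make the degenerate strata (the axis orbit, the edges on $C$ and $C^\perp$, the vertices $\ptt_{\pm1/2},\ptt^{\pm1/2}$) mechanical to check, and your replacement of degree theory by a compact-injective-continuous homeomorphism argument is arguably cleaner; the paper's route is coordinate-free and uses tools (Jacobi fields, reflection arguments) consonant with the maximum-principle techniques used elsewhere in the paper and more robust in settings lacking such an explicit cone description. Two small points to nail down in a final write-up: justify the half-space description of $\Om_0^0$ in one line (the iterated spherical cone over linearly independent points of an open hemisphere equals the intersection of $\Sph^3$ with the Euclidean positive span of the vertices, whose supporting hyperplanes are the four you list), and in (vi), if ``boundary'' is read as the topological frontier of the image in the hemisphere rather than as the disc boundary carried over by your homeomorphism, add an appeal to invariance of domain, noting as you do that the image avoids the equator $\widetilde{C}$.
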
 

\begin{proof} 
We can
assume without loss of generality that $i=j=0$. 
To prove (i) note that
$H:=\pp^0\cone C^{\pi/2}_{\pi/2} \subset \Sigma_{\pi/2}$
and
$H':=\pp_0\cone C^{\pi/2}_{\pi/2} \subset \Sigma^{\pi/2}$ are orthogonal 
closed hemispheres with common boundary $C^{\pi/2}_{\pi/2}$, 
intersecting $C^0_0$ orthogonally at $\pp^0$ and $\pp_0$ respectively, 
and satisfying  
$\rot^{\pi/2}_{\widetilde{C}} (H')=H$. 
(with $C_0^0$ appropriately oriented). 
Moreover
$\overline{\ptt^{-1/2}\ptt^{1/2}} \subset H$
and
$\overline{\ptt_{-1/2}\ptt_{1/2}} \subset H'$
with both geodesic segments avoiding the boundary $C^{\pi/2}_{\pi/2}$. 
Since two orthogonal hyperplanes separate $\R^4$
into four convex connected components,
(i) follows easily.
Because each of the bisecting spheres $\Sigma_0$ and $\Sigma^0$
is preserved by the family $\rot^{C_0^0}_t$,
the orbits of $K^{C_0^0}$ cannot cross either sphere, proving (ii).

Before turning to the remaining items
we first show that no orbit of $K^{C_0^0}$ intersects any face of $\Omega_0^0$
tangentially, except at a vertex.
By the symmetries it suffices to prove that orbits
intersect
$\overline{\ptt^0 \ptt_{1/2}\ptt^{1/2}} \subset \Sigma_{\pi/2m}$
and
$\overline{\ptt_0 \ptt_{1/2} \ptt^{1/2}} \subset \Sigma^{\pi/2k}$
transversely (if at all)
except at
$\ptt_{1/2}$
(the orbit through which is tangential to $\Sigma_{\pi/2m}$)
and 
$\ptt^{1/2}$
(the orbit through which is tangential to $\Sigma^{\pi/2k}$).
Of course the spheres $\Sigma_{\pi/2m}$ and $\Sigma^{\pi/2k}$ are minimal surfaces
and neither contains $C_0^0$,
so the Killing field $K^{C_0^0}$
induces a nontrivial Jacobi field on each of them.
A point where an orbit meets one of these spheres tangentially is a zero
of the corresponding Jacobi field,
but we know these nontrivial Jacobi fields are simply first harmonics,
each of whose nodal sets consists of a single great circle.
Clearly the reflection $\refl_{\Sigma^{\pi/2}}$ ($\refl_{\Sigma_{\pi/2}}$) preserves
the sides of $\Sigma_{\pi/2m}$ ($\Sigma^{\pi/2k}$)
and reverses each orbit of $K^{C_0^0}$.
Thus orbits can meet $\Sigma_{\pi/2m}$ ($\Sigma^{\pi/2k}$) tangentially
only along $C_{\pi/2m}^{\pi/2}$ ($C_{\pi/2}^{\pi/2k}$),
which intersects $\Om_0^0$ only at
$\ptt_{1/2}$ ($\ptt^{1/2}$),
establishing the asserted transversality.

Next we argue that no orbit of $K^{C_0^0}$ intersects
any face of $\Omega_0^0$ at more than one point.
Again (by the symmetries) it suffices to
show that every orbit intersects each of the faces
$\overline{\ptt_{1/2} \ptt^{-1/2} \ptt^{1/2}} \subset \Sigma_{\pi/2m}$
and
$\overline{\ptt_{-1/2} \ptt_{1/2} \ptt^{1/2}} \subset \Sigma^{\pi/2k}$
at most once.
To see this first note that the orbits of $K^{C_0^0}$ in $\R^4 \supset \Sph^3$
are planar circles, so if one intersects a great $2$-sphere at more than one point,
then the intersection must be either a planar circle (the entire orbit) or a pair of points.
In the first case the $2$-sphere so intersected must contain $C_0^0$,
but neither the sphere
$\Sigma^{\pi/2k}$  
nor the sphere
$\Sigma_{\pi/2m}$  
contains $C_0^0$, and so the orbits of $K^{C_0^0}$ must meet these spheres at most twice.
However, the reflection $\refl_{\Sigma^{\pi/2}}$ ($\refl_{\Sigma_{\pi/2}}$)
preserves both $\Sigma_{\pi/2m}$
($\Sigma^{\pi/2k}$)
and each orbit (as a set) of $K^{C_0^0}$,
so that if an orbit intersects $\Sigma_{\pi/2m}$
($\Sigma^{\pi/2k}$) in two points,
these points must lie either on or on opposite sides of $\Sigma^{\pi/2}$ ($\Sigma_{\pi/2}$).
Since in fact $\Omega_0^0$ crosses neither sphere of symmetry 
	and
	$
	 \Sigma^{\pi/2}
	 \cap
	 \overline{\ptt_{1/2} \ptt^{-1/2} \ptt^{1/2}}
         =
	 \ptt_{1/2}
	$
	(
        $
	 \Sigma_{\pi/2}
	 \cap
	 \overline{\ptt_{-1/2} \ptt_{1/2} \ptt^{1/2}}
         =
	 \ptt^{1/2}
        $
        ), 
we see that any orbit meets each face at most once, as claimed.

Now we are ready to prove (iii), (iv), and (v).
By the symmetries
it suffices to consider an orbit $O$ intersecting $\Omega_{0+}^{0+}$.
By (i) $O$ is not contained in $\Omega_{0+}^{0+}$ and by (ii) $O$ can enter (or exit) $\Omega_{0+}^{0+}$ only through
$\overline{\ptt_0 \ptt_{1/2} \ptt^{1/2}}$
or
$\overline{\ptt^0 \ptt_{1/2} \ptt^{1/2}}$,
but by the preceding paragraph it intersects each at most once.
Since
$\overline{\ptt_{1/2} \ptt^{1/2}}$
lies on both these triangles,
it follows that any orbit $O$ meeting
$\overline{\ptt_{1/2} \ptt^{1/2}}$
intersects $\Omega_0^0$ at only one point.
If on the other hand $O$ misses
$\overline{\ptt_{1/2} \ptt^{1/2}}$,
then, by the transversality above,
it must intersect the interior of $\Omega_0^0$,
so in this case it must cross
$\overline{\ptt_0 \ptt_{1/2} \ptt^{1/2}}
 \cup
 \overline{\ptt^0 \ptt_{1/2} \ptt^{1/2}}
$
at least twice,
meaning, by the above, that in fact $O$ must intersect each of these triangles exactly once.
This completes the proof of (iii), (iv), and (v).

For (vi) set $\Pi:=\Pi_0^0$.
Since the quadrilateral $Q_0^0$ is itself a closed curve missing
$C_{\pi/2}^{\pi/2}=\Pi^{-1}\left(C_{\pi/2}^{\pi/2}\right)$,
its image $Q':=\Pi\left(Q_0^0\right)$ under $\Pi$
is likewise a closed curve missing $C_{\pi/2}^{\pi/2}$.
By item (iv) (and the embeddedness of $Q_0^0$)
it follows that $Q'$ is an embedded closed curve in the interior of
$C_{\pi/2}^{\pi/2} \cone \pp_0$,
so that $\left(C_{\pi/2}^{\pi/2} \cone \pp_0\right) \backslash Q'$
has two connected components, one the disc bounded by $Q'$
and the other the annulus bounded by $Q'$ and $C_{\pi/2}^{\pi/2}$.
Call the closure of the disc $D'$.
Since the hemisphere
$\Pi^{-1}\left(\overline{\pp_0 \pp_{\pi/2}}\right)
 =C_0^0 \cone \pp_{\pi/2} \subset \Sigma^0
$
intersects $Q_0^0$ only at
$\ptt_{1/2}$,
we see that the geodesic arc $\overline{\pp_0 \pp_{\pi/2}}$
intersects $Q'$ exactly once
(at $\ptt_{1/2}$),
and so we conclude that $\pp_0 \in D'$.
A second application of item (iv)
ensures that $\Pi\left(\Omega_0^0 \backslash Q_0^0\right)$
misses $Q'$,
but $\Omega_0^0 \backslash Q_0^0$ is connected and includes $\pp_0$,
so we have $\Pi\left(\Omega_0^0\right) \subset D'$.
Last, note that
$
 D''
 := 
\partial_- \Omega_0^0 = 
 \overline{\ptt_{-1/2} \ptt_{1/2} \ptt^{-1/2}}
  \cup
  \overline{\ptt_{-1/2} \ptt_{1/2} \ptt^{1/2}}
$
is a disc in $\Omega_0^0$ whose boundary is $Q_0^0$ and thereby mapped by $\Pi$
homeomorphically onto $Q'=\partial D'$.
It follows (by degree theory) that $\Pi(D'')=D'$,
and so of course $\Pi(\Omega_0^0)=D'$ as well.
\end{proof}

\section{The Lawson surfaces}

\subsection*{Definition and basic properties} 
$\phantom{ab}$
\nopagebreak

We briefly discuss now the Lawson surfaces 
$\xi_{m-1,k-1}$   
defined in \cite{Lawson}.  
Since $\xi_{k-1,m-1}$ is congruent to $\xi_{m-1,k-1}$, we could restrict to $m \geq k$.
Since taking $k=1$ with any $m$ gives the great two-sphere 
and taking $m=k=2$ gives the Clifford torus,
we will assume $m \geq 3$ and $k\ge2$,  
and so the genus of $\xi_{m-1,k-1}$---which equals $(k-1)(m-1)$---is at least two. 
In this article we denote by $M[m,k]$ the Lawson surface $\xi_{m-1,k-1}$ 
positioned as in Theorem \ref{T:lawson} relative to the coordinate system defined in \ref{D:coordinates}.  
$M[m,k]$ can be viewed then as a desingularization along $C$ of 
$\Wcal := \bigcup_{j=1}^k \Sigma^{(2j-1)\pi/2k}$,  
or along $C^\perp$ of $\Wcal_\perp := \bigcup_{i=1}^m \Sigma_{(2i-1)\pi/2m}$.  
It can be proved that $M[m,k]$ converges as a varifold to $\Wcal$ as $m\to\infty$ for fixed $k$. 

\begin{theorem}[Lawson 1970 \cite{Lawson} and for the uniqueness part \cite{LindexI}] 
\label{T:lawson}
Given integers $k \geq 2$, $m \geq 3$, 
and $\forall i,j\in\Z$,  
there is a unique compact connected minimal surface $D_i^j \subset \Om_i^j$ with 
$\partial D_i^j = Q_i^j$ (recall \eqref{Om} and \eqref{D:Q}).  
Moreover $D_i^j$ is a disc, minimizing area among such discs, 
and  
$$
M=M[m,k] :=\bigcup_{i+j\in2\Z} D_i^j
$$ 
is an embedded connected closed (so two-sided) smooth minimal surface 
of genus $(k-1)(m-1)$. 
\end{theorem}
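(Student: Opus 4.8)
The plan is to first treat a single tetrahedron and then build $M$ by Schwarz reflection, reading off its genus from the resulting cell structure. By Definition \ref{l:D:rot} the rotations $\rot_{C^\perp}^{i\pi/m}$ and $\rot_{C}^{j\pi/k}$ are isometries of $\Sph^3$ carrying $\Om_0^0$ to $\Om_i^j$ and $Q_0^0$ to $Q_i^j$, so it suffices to produce the unique minimal surface in $\Om:=\Om_0^0$ bounded by $Q:=Q_0^0$. For existence I would solve the Plateau problem for the piecewise-geodesic Jordan curve $Q$ (four great-circle arcs, by \eqref{D:Q}), obtaining an area-minimizing disc $D$ with $\partial D=Q$. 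Since each face of $\Om$ lies on a great two-sphere (Lemma \ref{Om:p}(i)), which is totally geodesic and hence minimal, and $\Om$ is convex (Lemma \ref{Om:p}), the maximum principle confines $D$ to $\Om$; moreover the interior $D^\circ$ cannot touch any face, for otherwise $D$ would be contained in that great sphere, contradicting $\partial D=Q$. In particular $D^\circ\cap\partial\Om=\emptyset$, so $D\cap C=\{\ptt_{-1/2},\ptt_{1/2}\}$ and $D\cap C^\perp=\{\ptt^{-1/2},\ptt^{1/2}\}$.

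Next I would establish that $D$ is graphical, embedded, a disc, and the unique minimal surface in $\Om$ bounded by $Q$. Write $K:=K^{C_0^0}=K_{\widetilde{C}}$ and $\Pi:=\Pi_0^0$ (Definition \ref{Pi}). By Lemma \ref{L-alex}(vi) the map $\Pi$ sends $\Om$ onto a closed disc $D'$ with $\partial D'=\Pi(Q)$, and by Lemma \ref{L-alex}(iv) the curve $Q$ meets each orbit of $K$ at most once. I would then argue, by an Alexandrov-type maximum-principle argument in the style of Schoen \cite{schoen1983} (as carried out in \cite{LindexI}), that \emph{any} minimal surface in $\Om$ with boundary $Q$ is graphical for $K$: the reflections in $\Grp^{\Om_0^0}$ of \eqref{DgrpO} together with the disjointness $(\rot^t_{\widetilde{C}}\Om)\cap\Om=\emptyset$ of Lemma \ref{L-alex}(i) provide the initial configuration, and the interior and boundary (Hopf) maximum principles then force graphicality and, by comparing two competitors, uniqueness. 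Once $D$ is graphical, $\Pi|_D\colon D\to D'$ is a homeomorphism, so $D$ is an embedded disc, necessarily the area minimizer among discs bounded by $Q$. This graphicality-and-uniqueness step, which hinges on combining the detailed spherical geometry of Lemma \ref{L-alex} with the maximum principle, is the main obstacle.

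With the discs $D_i^j$ now defined for all $i,j$, I would assemble $M=\bigcup_{i+j\in2\Z}D_i^j$ and prove smoothness and embeddedness. Each of the four edges of $Q_0^0$ is a geodesic arc on a great circle of the form $C^{\pm\pi/2k}_{\pm\pi/2m}$, and the reflection $\refl$ in that circle (rotation by $\pi$ about it) carries $\Om_0^0$ onto a diagonally adjacent tetrahedron $\Om_{\pm1}^{\pm1}$ of even index sum and maps $Q_0^0$ to its quadrilateral; by uniqueness it therefore carries $D_0^0$ onto the corresponding disc. The classical reflection principle across a boundary geodesic then shows that $D_0^0$ and its neighbor meet $C^\infty$-smoothly along the shared arc, and iterating the group generated by these four reflections tiles all even-index tetrahedra and renders $M$ a smooth minimal surface away from $C\cup C^\perp$. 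Since $M$ meets $C\cup C^\perp$ only in the finitely many vertices $\ptt_a$ and $\ptt^b$ (with $a,b\in\frac12+\Z$), each an isolated point about which $M$ has finite area, the removable-singularity theorem for minimal surfaces makes $M$ smooth there as well. Embeddedness holds because the $D_i^j$ lie in the interiors of the tessellating tetrahedra and meet only along shared boundary arcs; connectedness holds because the diagonal adjacencies $(i,j)\mapsto(i\pm1,j\pm1)$ link all even-index discs; and that $M$ is closed and two-sided (orientable) follows as in \cite{Lawson}, each $D_i^j$ separating $\Om_i^j$ into a $\partial_+$-side and a $\partial_-$-side (recall \eqref{E:partialpm2}).

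Finally I would read off the genus from the Euler characteristic of the cell structure on $M$ whose faces are the discs, whose edges are the arcs of the $Q_i^j$, and whose vertices are the points $\ptt_a,\ptt^b$. There are $F=2km$ faces (the discs with $i+j\in2\Z$), $E=4km$ edges (each of the four arcs of every $Q_i^j$ is shared by exactly two such discs, namely $D_i^j$ and one diagonal neighbor $D_{i\pm1}^{j\pm1}$), and $V=2m+2k$ vertices (the $2m$ points $\ptt_a$ on $C$ together with the $2k$ points $\ptt^b$ on $C^\perp$). Hence $\chi(M)=V-E+F=2m+2k-2km=2(m+k-mk)$, and since $M$ is closed and orientable, $\chi(M)=2-2g$ gives $g=(m-1)(k-1)$, as claimed.
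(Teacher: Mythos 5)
Your proposal is correct in outline, but it differs from the paper in scope: the paper delegates everything except uniqueness to \cite{Lawson} (existence of the minimizing disc $D_i^j$, the assembly of $M$ by reflection, embeddedness, smoothness, and the genus), and devotes its proof entirely to the uniqueness claim, which it settles by exactly the mechanism you name --- sliding $D_i^j$ by $\rot^t$ along the axis, using Lemma \ref{L-alex}(i) to get disjointness for large $|t|$, taking the extremal $t'$ of contact, and invoking the interior maximum principle of \cite{schoen1983}. So for the one part the paper actually proves, your approach coincides with it; the rest of your write-up (Plateau solution confined to the convex tetrahedron, Schwarz reflection across the arcs of $Q_i^j$ via the circles $C^{\pm\pi/2k}_{\pm\pi/2m}$, removable singularities at the vertices, and the Euler-characteristic count $V-E+F=2m+2k-4km+2km$ giving $g=(m-1)(k-1)$) is a correct reconstruction of Lawson's original arguments, and your cell count is a nice explicit verification that the paper does not spell out.

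The one step you should flesh out is the passage from ``$\rot^t D$ and $D'$ are disjoint for all $t\neq 0$'' to ``$D=D'$'': your phrase ``by comparing two competitors, uniqueness'' hides two points the paper handles explicitly. First, at the extremal parameter $t'$ one must rule out that the contact occurs on a boundary; this follows from Lemma \ref{L-alex}(iv), since the orbit through any point of $Q_i^j$ meets $\Om_i^j$ only at that point, so $\rot^{t'}Q_i^j$ cannot touch $D'$ (nor $Q_i^j$ touch $\rot^{t'}D$) for $t'\neq 0$. Second, disjointness of the rotates alone does not yet identify the two surfaces: the paper's device is to observe that every orbit sufficiently close to $Q_i^j\setminus\Qv_i^j$ that enters $\Om_i^j$ must meet \emph{both} $D_i^j$ and $D'$, and since no nontrivial rotate of one meets the other, they must meet each such orbit at the \emph{same} point; hence the surfaces coincide on a neighborhood of $Q_i^j\setminus\Qv_i^j$ and are identical by unique continuation. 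This is cleaner than first proving graphicality of the unknown competitor $D'$ and comparing graph functions, and it avoids any appeal to a boundary Hopf lemma.
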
 

\begin{proof} 
The theorem except for the uniqueness part but including 
the existence of a minimizing disc $D_i^j$ is proved in \cite{Lawson}. 
Although the uniqueness is also claimed in \cite{Lawson}, 
the subsequent literature 
(for example \cite{choe:soret:2009}) does not assume uniqueness known. 
We provide now a simple proof of uniqueness. 

Suppose ${D'}_i^j$ is another connected minimal surface in $\Om_i^j$ with boundary $Q_i^j$. 
By \ref{L-alex} 
$\rot^t_{C_{i\pi/m+\pi/2}^{j\pi/k+\pi/2} } D_i^j$ cannot intersect 
${D'}_i^j$ for any $t\in(-\pi,0)\cup(0,\pi)$ because 
otherwise we can consider the $\sup$ or $\inf$ of such $t$'s which we call $t'$. 
For $t'$ then we would have tangential contact on one side in the interior. 
By the maximum principle 
\cite{schoen1983}*{Lemma 1}  
this would imply 
equality of the surfaces and the boundaries, a contradiction. 

By \ref{L-alex} 
the orbits which are close enough to $Q_i^j\setminus\Qv_i^j$ and intersect $\Om_i^j$  
also intersect $D_i^j$ and ${D'}_i^j$. 
Since there are no intersections for $t\ne0$ above, we conclude that  
$D_i^j$ and ${D'}_i^j$ agree on a neighborhood of $Q_i^j\setminus\Qv_i^j$ and therefore by analytic continuation they are identical. 
\end{proof}

\begin{corollary}[Umbilics on the Lawson surfaces]
\label{umb}
The surface $M=M[m,k]$ (as in \ref{T:lawson}) has exactly 
$2k+2m$ umbilics when $k>2$ and 
$4$ umbilics when $k=2$ (recall $m>2$). 
Moreover the umbilics are 
$\ptt^j$ for $j \in \frac12+\Z$, each of degree $m-2$, 
and when $k>2$, 
$\ptt_i$ for $i \in \frac12+\Z$, each of degree $k-2$. 
\end{corollary}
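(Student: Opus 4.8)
The plan is to identify the umbilics of $M=M[m,k]$ with the zeros of its Hopf differential and to determine their orders from the rotational symmetries about $C$ and $C^\perp$ together with a single global count. Since $M$ is minimal in the constant–curvature space $\Sph^3$, the $(2,0)$–part of its second fundamental form is a holomorphic quadratic differential $\Phi$ for the induced conformal structure; the umbilics are exactly the zeros of $\Phi$, and the degree of an umbilic is the order to which $\Phi$ vanishes there. As $M$ has genus $g=(k-1)(m-1)\ge2$ by \ref{T:lawson} it is not totally geodesic, so $\Phi\not\equiv0$, and the number of its zeros counted with multiplicity equals $\deg K_M^{\otimes2}=4g-4=4km-4k-4m$.

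Next I would analyze the candidate umbilics locally. Set $\rho:=\rot_{C^\perp}^{2\pi/m}$. This fixes $C^\perp$ pointwise and sends $\ptt_a\mapsto\ptt_{a+2}$, hence maps $\Om_i^j$ to $\Om_{i+2}^j$ and, by the uniqueness in \ref{T:lawson}, $D_i^j$ to $D_{i+2}^j$; since it preserves the parity $i+j\in2\Z$ it preserves $M$, and it fixes every $\ptt^j$. It has order $m$ and acts on $\Span(C)$ by rotation through $2\pi/m$ while fixing the tangent line $T_{\ptt^j}C^\perp$; since $T_{\ptt^j}\Sph^3=\Span(C)\oplus T_{\ptt^j}C^\perp$ and for $m\ge3$ this rotation admits no invariant line in $\Span(C)$, the only $\rho$–invariant $2$–plane is $\Span(C)$ itself. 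As $\rho$ preserves $M$ and fixes $\ptt^j$ it preserves $T_{\ptt^j}M$, forcing $T_{\ptt^j}M=\Span(C)$, i.e. $C^\perp$ meets $M$ orthogonally at $\ptt^j$. Being a rotation on $\Span(C)=T_{\ptt^j}M$ it preserves the induced orientation and the unit normal, so $\rho^*\Phi=\Phi$. In a conformal coordinate $z$ centred at $\ptt^j$ one has $\rho(z)=\zeta z$ for a primitive $m$–th root of unity $\zeta$, and writing $\Phi=f(z)\,dz^2$ with $f(z)=cz^n+O(z^{n+1})$ invariance gives $\zeta^{\,n+2}=1$, hence $n\equiv-2\pmod m$ and therefore $n\ge m-2$ (so $\ptt^j$ is indeed an umbilic). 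Exchanging the roles of $C,C^\perp$ and of $m,k$ and using $\rot_C^{2\pi/k}$ gives, for $k\ge3$, order $\ge k-2$ at each $\ptt_i$.

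Finally I would close the count. There are exactly $2k$ distinct points $\ptt^j$ and $2m$ distinct points $\ptt_i$ (by the $2k$– and $2m$–periodicity in $j$ and $i$), and they are all distinct since $C\cap C^\perp=\emptyset$. When $k>2$ the orders above sum to at least $2k(m-2)+2m(k-2)=4km-4k-4m=4g-4$, the exact total number of zeros of $\Phi$; hence every inequality is an equality, $\Phi$ vanishes to order exactly $m-2$ at each $\ptt^j$ and exactly $k-2$ at each $\ptt_i$, and there are no further zeros, giving precisely $2k+2m$ umbilics of the stated degrees. When $k=2$ the bound at the $\ptt^j$ alone already gives $2k(m-2)=4(m-2)=4g-4$ (as $g=m-1$), so equality holds throughout: the four points $\ptt^j$ are umbilics of degree $m-2$, there are no others, and in particular each $\ptt_i$ (where only $n\equiv0\pmod2$ is known) must have $n=0$, i.e. is not umbilic.

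The step I expect to be the main obstacle is the local one: establishing that at $\ptt^j$ the surface carries an order–$m$ rotational symmetry acting on $T_{\ptt^j}M$ by rotation through $2\pi/m$, and in particular that $C^\perp$ is orthogonal to $M$ there. This is exactly where the hypotheses $m\ge3$ (respectively $k\ge3$) enter, and it is the reason the method yields no information at $\ptt_i$ when $k=2$. Once it is in place, the congruence $n\equiv-2\pmod m$ combined with the exact value $4g-4$ of the total zero count does the rest and simultaneously rules out umbilics elsewhere, so no separate localization argument is required.
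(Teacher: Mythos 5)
Your proposal is correct, and its global half coincides exactly with the paper's: both arguments rest on the total umbilic degree $4g-4$ (Lawson's Proposition 1.5, i.e.\ the holomorphicity of the Hopf differential) together with the identity $2k(m-2)+2m(k-2)=4(k-1)(m-1)-4$, which forces all the local lower bounds to be equalities and excludes further umbilics; your treatment of $k=2$ via $2k(m-2)=4g-4$ is the same degenerate case of this count. Where you differ is in the local step. The paper observes that the $m$ circles of $\mathbf{C}_Q$ through each $\ptt^j$ (and the $k$ through each $\ptt_i$) are curves of reflectional symmetry contained in $M$, hence lines of curvature, and that $m$ distinct line-of-curvature directions at a point force an umbilic of degree at least $m-2$; you instead use the order-$m$ rotation $\rot_{C^\perp}^{2\pi/m}$, which preserves $M$ by the uniqueness clause of Theorem \ref{T:lawson}, and read off the congruence $n\equiv -2\pmod m$ for the vanishing order of the Hopf differential from its equivariance in a linearizing conformal coordinate. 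The two mechanisms are close cousins (both exploit the stabilizer of $\ptt^j$ in the symmetry group), but they have slightly different costs: the paper gets the tangency data for free because the reflection circles lie \emph{on} $M$, whereas you must first argue that $T_{\ptt^j}M=\Span(C)$ from the absence of invariant lines under a rotation of order $m\ge 3$ --- a point you handle correctly, and which transparently explains why the method is silent at $\ptt_i$ when $k=2$, where the count then finishes the job. Both routes are complete and essentially interchangeable here.
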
 

\begin{proof}
Recall that from the construction of the Lawson surfaces it is clear that the circles in 
$\mathbf{C}_Q$ are contained in $M$ and are circles of reflectional symmetry. 
Since there are exactly $m$ such circles through each $\ptt^j$ 
and exactly $k$ such circles through each $\ptt_i$, 
it follows that the points are umbilical as claimed, and of degree at least as claimed.   
(Actually since by the maximum principle $\forall i',j'\in\Z$ we have $D_{i'}^{j'} \cap\partial\Om_{i'}^{j'} = Q_{i'}^{j'}$, the degrees are exactly as claimed, 
but this follows also from the total degree argument below.) 
By a result of Lawson \cite{Lawson}*{Proposition 1.5} 
the total degree of the umbilics on the surface is
$4g-4=4(k-1)(m-1)-4=2k(m-2)+2m(k-2)$, 
which implies that there are no more umbilics and the degrees are as claimed. 
\end{proof} 

\begin{corollary}[Symmetries of the Lawson discs] 
\label{Dsym0}
For $k,m,i,j$ as in \ref{T:lawson}  
$\Gsym^{D_i^j}= \Gsym^{\Om_i^j}$; 
hence $D_i^j$ and $M$ are symmetric with respect to   
$\refl_{\Sigma^{ j\pi /k } }$, $\refl_{\Sigma_{ i\pi /m } }$, and  
$\refl_{C_{i\pi/m}^{j\pi/k} } = \refl_{ \ptt_{i},\ptt^{j} } $. 
\end{corollary}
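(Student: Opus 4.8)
The plan is to establish the two inclusions $\Gsym^{\Om_i^j}\subseteq\Gsym^{D_i^j}$ and $\Gsym^{D_i^j}\subseteq\Gsym^{\Om_i^j}$ separately: the first rests on the uniqueness clause of \ref{T:lawson}, the second on elementary spherical geometry. The displayed symmetries of $D_i^j$ and $M$ then follow from \ref{Om:p}(iii) together with a parity bookkeeping. Throughout, every $\mathbf{g}\in\Gsym^{\Om_i^j}$ or $\Gsym^{D_i^j}$ is an element of $O(4)$, hence an isometry of $\Sph^3$ carrying minimal surfaces to minimal surfaces, and the whole argument will be uniform in the cases $m\ne k$ and $m=k>2$.

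The substantive direction is $\Gsym^{\Om_i^j}\subseteq\Gsym^{D_i^j}$. Fix $\mathbf{g}\in\Gsym^{\Om_i^j}$. Since $\mathbf{g}$ fixes $\Om_i^j$ setwise it permutes the edges while preserving their lengths and dihedral angles, so I would first identify $Q_i^j$ intrinsically: by \ref{Om:p}(ii) and the assumptions $m\ge3$, $k\ge2$ of \ref{T:lawson}, the four edges constituting $Q_i^j$ (recall \eqref{D:Q}) are exactly the edges of $\Om_i^j$ whose length and dihedral angle both equal $\pi/2$, whereas the edge on $C$ (length $\pi/m<\pi/2$) and the edge on $C^\perp$ (dihedral angle $\pi/m<\pi/2$) each fail this description. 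Hence $\mathbf{g}\,Q_i^j=Q_i^j$. Then $\mathbf{g}\,D_i^j$ is a compact connected minimal surface contained in $\mathbf{g}\,\Om_i^j=\Om_i^j$ with boundary $\mathbf{g}\,Q_i^j=Q_i^j$, so the uniqueness in \ref{T:lawson} forces $\mathbf{g}\,D_i^j=D_i^j$, i.e. $\mathbf{g}\in\Gsym^{D_i^j}$.

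For the reverse inclusion I would recover $\Om_i^j$ from $D_i^j$. If $\mathbf{g}\in\Gsym^{D_i^j}$ then $\mathbf{g}$ preserves $\partial D_i^j=Q_i^j$. Now $Q_i^j$ is an embedded closed curve made of four geodesic arcs meeting at the four points of $\Qv_i^j$ (recall \eqref{D:Qv}) with interior angles $\pi/k$ and $\pi/m$, each less than $\pi$; therefore the non-smooth points of $Q_i^j$ are exactly $\Qv_i^j$, which $\mathbf{g}$ must permute. Since the cone construction of \ref{D:cone} is $O(4)$-equivariant, $\mathbf{g}\,\Om_i^j$ is the spherical tetrahedron with vertex set $\mathbf{g}\,\Qv_i^j=\Qv_i^j$; because a tetrahedron is convex (\ref{Om:p}) and so determined by its set of vertices independently of their ordering, $\mathbf{g}\,\Om_i^j=\Om_i^j$, i.e. $\mathbf{g}\in\Gsym^{\Om_i^j}$. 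Combining the two inclusions yields $\Gsym^{D_i^j}=\Gsym^{\Om_i^j}$.

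Finally, \ref{Om:p}(iii) gives $\Grp^{\Om_i^j}\subseteq\Gsym^{\Om_i^j}=\Gsym^{D_i^j}$, so by \eqref{DgrpO} the reflections $\refl_{\Sigma^{j\pi/k}}$, $\refl_{\Sigma_{i\pi/m}}$ and $\refl_{C_{i\pi/m}^{j\pi/k}}$ are symmetries of $D_i^j$. To pass to $M$ I would observe that each of these reflections preserves the tessellation $\Ombold$, sending $\Om_{i'}^{j'}$ to a tetrahedron $\Om_{i''}^{j''}$ with $i''+j''\equiv i'+j'\pmod 2$: the first fixes every $i'$ and sends $j'\mapsto2j-j'$, the second fixes every $j'$ and sends $i'\mapsto2i-i'$, and the third is their composition. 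By the uniqueness in \ref{T:lawson} each reflection carries $D_{i'}^{j'}$ to the Lawson disc of the image tetrahedron, hence permutes the discs making up $M=\bigcup_{i'+j'\in2\Z}D_{i'}^{j'}$ and so preserves $M$. The main delicate point is the intrinsic recovery of $Q_i^j$ from $\Om_i^j$ (via edge lengths and dihedral angles) and of $\Qv_i^j$ from $Q_i^j$ (as its non-smooth points); everything else is the uniqueness already in hand plus routine bookkeeping.
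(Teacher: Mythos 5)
Your proof is correct and follows essentially the same route as the paper's: the uniqueness clause of \ref{T:lawson} gives $\Gsym^{\Om_i^j}\subseteq\Gsym^{D_i^j}$ once one checks (as you do, via edge lengths and dihedral angles, using $m\ge3$) that symmetries of $\Om_i^j$ preserve $Q_i^j$, and the reverse inclusion comes from recovering $\Qv_i^j$ and hence $\Om_i^j$ from $\partial D_i^j=Q_i^j$. The only cosmetic difference is the last step: the paper passes from $D_i^j$ to $M$ by analytic continuation, whereas you track the parity-preserving action of the reflections on the tessellation and reapply uniqueness disc by disc --- both work.
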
 

\begin{proof} 
Since we assume $m\ge3$ the symmetries of $\Om_i^j$ are symmetries of $Q_i^j$. 
The symmetries of $Q_i^j$ are symmetries of $D_i^j$ by the uniqueness of $D_i^j$ discussed in \ref{T:lawson}. 
Conversely any symmetry of $D_i^j$ has to be a symmetry of its boundary $Q_i^j$ and then of its vertices, and hence of $\Om_i^j$ as well. 
Using \ref{Om:p}.iii, \ref{DgrpO}, and analytic continuation, this completes the proof. 
\end{proof}

\begin{lemma}[Graphical property and subdivisions of $D_i^j$] 
\label{Dsym}
For $k,m,i,j$ as in \ref{T:lawson} we have 
\\ 
(i)  
$D_i^j$ is graphical---with its interior strongly graphical---with respect to 
$K^{C_{i\pi/m}^{j\pi/k} } = K_{C_{i\pi/m+\pi/2}^{j\pi/k+\pi/2} }$ (recall \ref{graphical})  
and each orbit which intersects $\Om_i^j$ intersects $D_i^j$ as well. 
\\ 
(ii)  
Each of $D_{i\pm}^{j} := D_i^j \cap  \Om_{i\pm}^{j}$, $D_{i}^{j\pm} := D_i^j \cap  \Om_{i}^{j\pm}$, and $D_{i\pm}^{j\pm} := D_i^j \cap  \Om_{i\pm}^{j\pm}$ 
is homeomorphic to a closed disc.  
\end{lemma}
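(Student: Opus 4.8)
The plan is to reduce everything to the single homeomorphism $\Pi|_D\colon D\to D'$ supplied by \ref{L-alex}.vi. After reducing to $i=j=0$ by the symmetry $\rot_{C^\perp}^{i\pi/m}\rot_{C}^{j\pi/k}$ of the tessellation (which carries the $(0,0)$ data---tetrahedron, Lawson disc, axis, Killing field, bisecting spheres, and projection---to the $(i,j)$ data), I abbreviate $D:=D_0^0$, $\widetilde C:=C_{\pi/2}^{\pi/2}=(C_0^0)^\perp$, $\Pi:=\Pi_0^0$, and let $D'\subset\widetilde C\cone\pp_0$ be the closed disc of \ref{L-alex}.vi. The cornerstone is the disjointness
\[
\rot_{\widetilde C}^t D\cap D=\emptyset\qquad\text{for all }t\in(0,2\pi),
\]
which I obtain by splicing two available ranges: the argument in the uniqueness proof of \ref{T:lawson} (with ${D'}_0^0=D$) gives disjointness for $t\in(-\pi,0)\cup(0,\pi)$, while \ref{L-alex}.i gives $\rot_{\widetilde C}^t\Om_0^0\cap\Om_0^0=\emptyset$, hence a fortiori $\rot^t_{\widetilde C}D\cap D=\emptyset$, for $t\in(\pi/2,3\pi/2)\cup(-3\pi/2,-\pi/2)$; modulo the period $2\pi$ these cover $(0,2\pi)$.

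For (i), two distinct points of $D$ on a common orbit of $K_{\widetilde C}=K^{C_0^0}$ would produce some $t\in(0,2\pi)$ with $\rot_{\widetilde C}^tD\cap D\ne\emptyset$, so $D$ is graphical; the displayed disjointness also gives $\widetilde C\cap D=\emptyset$ (a point of $D$ fixed by every $\rot^t_{\widetilde C}$ would lie in $\rot_{\widetilde C}^tD\cap D$), so $D$ sits in the foliated region $\Sph^3\setminus\widetilde C$. That every orbit meeting $\Om_0^0$ meets $D$ follows by repeating the degree argument of \ref{L-alex}.vi with $D$ in place of $\partial_-\Om_0^0$: since $\Pi|_{Q_0^0}$ is injective by \ref{L-alex}.iv, it maps $\partial D=Q_0^0$ homeomorphically onto $\partial D'$, whence $\Pi(D)=D'=\Pi(\Om_0^0)$. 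In particular $\Pi|_D\colon D\to D'$ is a continuous bijection of a compact space onto a Hausdorff space, hence a homeomorphism.

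Strong graphicality of the interior is the delicate point. Writing $\nu$ for a unit normal of $D$, the function $u:=\langle K_{\widetilde C},\nu\rangle$ is a Jacobi field on the minimal disc $D$, and an orbit is tangent to the interior at $x_0$ exactly when $u(x_0)=0$. Over any compact interior subdomain, $\rot^t_{\widetilde C}D$ is, for small $t$, a normal graph over $D$ of height $w_t=tu+O(t^2)$; the displayed disjointness forces $w_t$ to be zero-free for small $t\ne0$, so $u$ is of one sign there, and therefore of one sign on the whole connected interior. Moreover $u$ cannot vanish on a nonempty open set, for then $D$ would contain an arc of an orbit, again contradicting the disjointness. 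The strong maximum principle (Harnack inequality) for the Jacobi operator $\Delta_D+\abs{A}^2+\Ric(\nu,\nu)$ then upgrades $u\ge0$, $u\not\equiv0$ to $u>0$ throughout the interior; that is, no orbit is tangent to the interior, as required.

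For (ii) I use that $\Sigma_0$ and $\Sigma^0$ are preserved by the family $\rot^t_{\widetilde C}$ (noted in the proof of \ref{L-alex}), hence are unions of orbits of $K_{\widetilde C}$. Each meets $\Om_0^0$ in a geodesic triangle crossed at most once by every orbit (by the face analysis in the proof of \ref{L-alex}), so $\mu:=\Pi(\Sigma_0\cap\Om_0^0)$ and $\nu:=\Pi(\Sigma^0\cap\Om_0^0)$ are embedded arcs in $D'$ joining two points of $\partial D'$ through the center $\pp_0=\Pi(\ptt_0)$; since the orbits common to $\Sigma_0$ and $\Sigma^0$ lie in $\Sigma_0\cap\Sigma^0=C_0^0$ (a single orbit projecting to $\pp_0$), the arcs $\mu,\nu$ meet only at $\pp_0$ and so cut $D'$ into four quadrant discs. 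Because $\Sigma_0,\Sigma^0$ are orbit-invariant, each closed sub-tetrahedron $\Om_{0\pm}^0,\Om_0^{0\pm},\Om_{0\pm}^{0\pm}$ is orbit-saturated relative to $\Om_0^0$, so its $\Pi$-image is exactly the corresponding half- or quarter-disc of $D'$ bounded by $\mu$, $\nu$, and $\partial D'$, and $\Pi|_D$ restricts to a homeomorphism of $D\cap(\text{sub-tetrahedron})$ onto that sub-disc. Each such sub-disc is a closed topological disc, whence so are $D_{0\pm}^0$, $D_0^{0\pm}$, and $D_{0\pm}^{0\pm}$, proving (ii). The main obstacle is the interior strong graphicality, namely the one-sign-plus-maximum-principle argument for $u$; the remaining assertions are essentially topological consequences of the single homeomorphism $\Pi|_D$ together with the orbit-invariance of the symmetry spheres.
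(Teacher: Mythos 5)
Your proof is correct and follows essentially the same route as the paper: graphicality via the Alexandrov-style rotation/maximum-principle disjointness (with Lemma \ref{L-alex}(i) supplying the starting disjointness, which you helpfully make explicit to cover $t=\pi$), strong graphicality of the interior via the sign of the Jacobi field $\langle K,\nu\rangle$ and the strong maximum principle, surjectivity of $\Pi$ onto $D'$ from $\Pi(\partial D_i^j)=\partial D'$, and part (ii) from the fact that $\Pi$ respects the symmetries (equivalently, the orbit-invariance of the bisecting spheres). The paper states these steps more tersely; your write-up fills in the same details.
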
 

\begin{proof} 
To prove (i) we first prove that $D_i^j$ is graphical.  
This follows by the same argument 
as in the second paragraph of the proof of \ref{T:lawson} but with ${D'}_i^j$ replaced by $D_i^j$.
Consider now the Jacobi field 
$\nu\cdot K_{C_{i\pi/m+\pi/2}^{j\pi/k+\pi/2} }$,  
which clearly by the graphical property and appropriate choice of $\nu$ is $\ge0$ on $D_i^j$ and hence by the maximum principle is $>0$ on the interior of $D_i^j$. 
This implies that the interior of $D_i^j$ is strongly graphical. 

Next we 
recall the projection map 
\begin{equation} 
\label{Piij}
\Pi_i^j:= \Pi^{C_{i\pi/m}^{j\pi/k}}_{\pp_{i\pi/m} }  
: \Om_i^j \to C_{i\pi/m+\pi/2}^{j\pi/k+\pi/2} \cone \pp_{i\pi/m} 
\end{equation} 
defined in \ref{L-alex}.vi.  
Let $D':=\Pi_i^j(\Om_i^j)$, 
which by \ref{L-alex}.vi is homeomorphic to a closed disc with $\partial D'= \Pi_i^j(Q_i^j)$.  
Clearly then $\Pi_i^j( D_i^j )\subset D'$.  
Since $\partial D_i^j=Q_i^j$ we have also 
$\Pi_i^j( \partial D_i^j ) = \partial D'$,  
and therefore 
$\Pi_i^j( D_i^j ) = D'$, 
which completes the proof of (i).  

Furthermore, as shown above, $D_i^j$ is graphical with respect to
$K^{C_{i\pi/m}^{j\pi/k}}$,
so the restriction $\Pi_i^j|_{D_i^j}$ is one-to-one.
We conclude that $\Pi_i^j$ takes $D_i^j$ homeomorphically onto 
$D'$. 
The proof of (ii) is then completed by the fact that $\Pi_i^j$ clearly respects the symmetries of $\Omega_i^j$.
\end{proof} 

By the definitions 
$\forall i,j\in\Z$ we have  
$M \cap  \Om_{i\pm}^{j\pm} = D_{i\pm}^{j\pm}$ 
when $i+j\in 2\Z$ 
and 
$M \cap  \Om_{i\pm}^{j\pm} = \emptyset$ 
when $i+j\in 2\Z+1$.  
By \ref{Dsym} 
each $D_{i\pm}^{j\pm}$ is an embedded minimal disc. 
To study $\partial D_{i\pm}^{j\pm}$ and its intersections with its two-spheres of symmetry we define 
the intersections of $D_i^j$ and $D_{i\pm}^{j\pm}$ with the bisecting two-spheres as follows. 
\begin{equation} 
\label{ab} 
\begin{aligned} 
\alpha_i^{j\pm} :=& \, 
D_i^j \cap \overline{ \, \ptt_{ i } \ptt^{j} \ptt^{j\pm\frac12} \, } = 
D_i^{j\pm} \cap \Sigma_{ i \frac\pi{m}  } ,   
\\
\alpha_i^{j} :=& \, 
D_i^j \cap \overline{ \, \ptt_{ i} \ptt^{ j -\frac12 } \ptt^{ j+\frac12 } \, } = 
D_i^{j} \cap \Sigma_{ i \frac\pi{m}  } = 
\alpha_i^{j - }  \cup \alpha_i^{j + } ,   
\\ 
\beta_{i\pm}^j :=& \, 
D_i^j \cap \overline{ \, \ptt_{i} \ptt_{i\pm\frac12} \ptt^{ j } \, } =  
D_{i\pm}^j \cap \Sigma^{ j \frac\pi{k}  }  ,  
\\ 
\beta_{i}^j :=& \, 
D_i^j \cap \overline{ \,  \ptt_{ i-\frac12 }  \ptt_{ i+\frac12 } \ptt^{ j } \, } =  
D_{i}^j \cap \Sigma^{ j \frac\pi{k}  }  = 
\beta_{i - }^j \cup \beta_{i + }^j  .  
\end{aligned} 
\end{equation}

\begin{lemma}[The $\alpha$ and $\beta$ curves] 
\label{Dpm} 
For $k,m,i,j$ as in \ref{T:lawson} the following hold. 
\\
(i)  
$D_i^j$ intersects $\overline{ \pp_{ i\frac\pi m }  \pp^{ j\frac\pi k } } = 
\overline{ \ptt_{ i }  \ptt^{ j  } } 
$ 
at a single point which we will call $\QQ_i^j$. 
\\ 
(ii)  
The sets $\alpha_i^{j - }$, $\alpha_i^{j + }$, $\beta_{i - }^j$, $\beta_{i + }^j$, $\alpha_i^{j}$, and $\beta_{i}^j$  
are connected curves with 
$\partial \alpha_i^{j - } = \{  \ptt^{ j -\frac12 } , \QQ_i^j \}$, 
$\partial \alpha_i^{j + } =  \{  \ptt^{ j+ \frac12 } , \QQ_i^j \}$, 
$\partial \beta_{i - }^j =  \{ \ptt_{ i -\frac12 } , \QQ_i^j \}$, 
$\partial \beta_{i + }^j =  \{ \ptt_{ i+ \frac12 } , \QQ_i^j \}$, 
$\partial \alpha_i^{j} =  \{ \ptt^{j-\frac12}, \ptt^{j+ \frac12 } \}$, 
and 
$\partial \beta_{i}^j = \{ \ptt_{i -\frac12 }, \ptt_{i+\frac12 } \}$. 
\\ 
(iii)  
$\partial D_{i\pm}^{j\pm} \, 
 = \, 
 \overline{ \pp_{ (2i \pm 1)\frac\pi{2m} } \,
   \pp^{ (2j \pm 1)\frac{\pi}{2k}  } }  
\cup 
\alpha_i^{j\pm} 
\cup 
\beta_{i\pm}^j
\, = \, \overline{ \ptt_{ i \pm \frac12 } \, \ptt^{ j \pm \frac12 } }  
\cup 
\alpha_i^{j\pm} 
\cup 
\beta_{i\pm}^j.$ 
\end{lemma}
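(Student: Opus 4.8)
The plan is to study all of these sets through the projection $\Pi_i^j$ of \eqref{Piij}, which by Lemma \ref{Dsym} restricts to a homeomorphism of $D_i^j$ onto the disc $D':=\Pi_i^j(\Om_i^j)$. Two structural facts drive everything. First, both bisecting spheres $\Sigma_{i\pi/m}$ and $\Sigma^{j\pi/k}$ are preserved by the family $\rot^{C_{i\pi/m}^{j\pi/k}}_t$, as noted in the proof of Lemma \ref{L-alex}, hence are unions of orbits of $K^{C_{i\pi/m}^{j\pi/k}}$; second, by Corollary \ref{Dsym0} the reflections $\refl_{\Sigma_{i\pi/m}}$ and $\refl_{\Sigma^{j\pi/k}}$ restrict to isometric involutions of the minimal disc $D_i^j$. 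Throughout I use $D_i^j\cap\partial\Om_i^j=Q_i^j$, the maximum-principle fact recorded in the proof of \ref{umb}. For (i), the axis $\overline{\ptt_i\ptt^j}$ lies on the great circle $C_{i\pi/m}^{j\pi/k}$, which is precisely the orbit of $K^{C_{i\pi/m}^{j\pi/k}}$ through $\ptt_i$; therefore $\Pi_i^j$ collapses the whole segment to the single point $\Pi_i^j(\ptt_i)=\ptt_i=\pp_{i\pi/m}$. Since $\Pi_i^j|_{D_i^j}$ is injective, $D_i^j$ meets $\overline{\ptt_i\ptt^j}$ in at most one point, and since $\ptt_i$ lies in the interior of $D'$ (shown in the proof of \ref{L-alex}.vi) it meets it in exactly one point, which we name $\QQ_i^j$.

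For (ii) I argue for the $\alpha$ curves; the statement for the $\beta$ curves follows by the same argument with the roles of $C$ and $C^\perp$, of $\Sigma_{i\pi/m}$ and $\Sigma^{j\pi/k}$, and of $\ptt^{j\pm1/2}$ and $\ptt_{i\pm1/2}$ interchanged. The set $\alpha_i^j=D_i^j\cap\Sigma_{i\pi/m}$ is the fixed-point set of the involution $\refl_{\Sigma_{i\pi/m}}|_{D_i^j}$, so it is a properly embedded $1$-submanifold of $D_i^j$ each of whose components is either a circle or an arc with both endpoints on $\partial D_i^j=Q_i^j$. The reflection fixes $\ptt^{j-1/2}$ and $\ptt^{j+1/2}$ and interchanges $\ptt_{i-1/2}$ with $\ptt_{i+1/2}$, so $Q_i^j\cap\Sigma_{i\pi/m}=\{\ptt^{j-1/2},\ptt^{j+1/2}\}$, giving exactly two boundary points. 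To exclude circles and extra arcs I project: as $\Sigma_{i\pi/m}$ is a union of orbits, $\Pi_i^j(\alpha_i^j)$ is contained in $\Sigma_{i\pi/m}\cap(\widetilde C\cone\ptt_i)$, a single geodesic arc, so injectivity of $\Pi_i^j|_{D_i^j}$ rules out circle components, and the two available boundary points then force $\alpha_i^j$ to be one arc joining $\ptt^{j-1/2}$ to $\ptt^{j+1/2}$. Finally, by (i), $\alpha_i^j\cap\Sigma^{j\pi/k}=D_i^j\cap C_{i\pi/m}^{j\pi/k}=\{\QQ_i^j\}$, so $\QQ_i^j$ splits $\alpha_i^j$ into $\alpha_i^{j-}$ and $\alpha_i^{j+}$ with the asserted endpoints.

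For (iii) I decompose the boundary of the tetrahedron $\Om_{i\pm}^{j\pm}=\overline{\ptt_i\ptt_{i\pm1/2}\ptt^j\ptt^{j\pm1/2}}$ into its four faces: the two faces $\overline{\ptt_{i\pm1/2}\ptt^j\ptt^{j\pm1/2}}\subset\Sigma_{(i\pm1/2)\pi/m}$ and $\overline{\ptt_i\ptt_{i\pm1/2}\ptt^{j\pm1/2}}\subset\Sigma^{(j\pm1/2)\pi/k}$ lying on $\partial\Om_i^j$, and the two bisecting faces $\overline{\ptt_i\ptt^j\ptt^{j\pm1/2}}\subset\Sigma_{i\pi/m}$ and $\overline{\ptt_i\ptt_{i\pm1/2}\ptt^j}\subset\Sigma^{j\pi/k}$. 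Intersecting $D_{i\pm}^{j\pm}=D_i^j\cap\Om_{i\pm}^{j\pm}$ with the first pair and using $D_i^j\cap\partial\Om_i^j=Q_i^j$ leaves exactly the edge $\overline{\ptt_{i\pm1/2}\ptt^{j\pm1/2}}$, while intersecting with the bisecting pair gives $\alpha_i^{j\pm}$ and $\beta_{i\pm}^j$ by \eqref{ab}. As $D_{i\pm}^{j\pm}$ is a closed disc by Lemma \ref{Dsym}.ii and these three arcs meet pairwise in the points $\ptt_{i\pm1/2}$, $\ptt^{j\pm1/2}$, and $\QQ_i^j$, they assemble into its single boundary circle, which is (iii).

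The main obstacle is the middle of step (ii): ruling out that $\alpha_i^j$ (or $\beta_i^j$) carries interior circles or additional arcs, which the involution structure alone permits. This is exactly where graphicality is indispensable, since it is the injectivity of $\Pi_i^j$ on $D_i^j$, together with the collapse of the orbit-union $\Sigma_{i\pi/m}$ to a simple arc of the base hemisphere, that forces connectedness; without it one would only know that $\alpha_i^j$ is some compact one-manifold with the correct boundary.
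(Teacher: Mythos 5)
Your proposal is correct and follows essentially the same route as the paper, whose own proof consists only of the remark that $\Pi_i^j$ is a symmetry-respecting homeomorphism of $D_i^j$ onto $D'$ and that the rest is ``straightforward from the definitions.'' You have simply supplied the details the paper omits (the involution structure of $\alpha_i^j$ and $\beta_i^j$, the collapse of the orbit-unions $\Sigma_{i\pi/m}$ and $\Sigma^{j\pi/k}$ under $\Pi_i^j$, and the face-by-face computation of $\partial D_{i\pm}^{j\pm}$), all of which check out.
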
 
  
\begin{proof}
As in the previous proof we consider $\Pi_i^j$, 
which is a homeomorphism from $D_i^j$ onto $D'$ and moreover respects the symmetries of $\Om_i^j$. 
Using the various definitions it is then straightforward to complete the proof. 
\end{proof}

\subsection*{Groups of symmetries} 
$\phantom{ab}$
\nopagebreak

\begin{definition}
\label{collections} 
(i) 
Let 
$\mathbf{C}_Q:=  
\left\{ \Sph( \ptt_{i},\ptt^{j} ) = {C_{i\pi/m}^{j\pi/k} } 
\right\}_{i,j\in\frac12+\Z}$, 
a collection of $km$ great circles contained in $M$ by \ref{T:lawson},  
which are the axes of the tetrahedra in $\Omboldu$ 
and contain the edges of the tetrahedra in $\Ombold$.  
\\ 
(ii) 
Let 
$\mathbf{C}_{axes}:=  
\left\{ \Sph( \ptt_{i},\ptt^{j} ) = {C_{i\pi/m}^{j\pi/k} } 
\right\}_{i,j\in\Z}$, 
a collection of $km$ great circles not contained in $M$ by \ref{Dpm}(i),  
which are the axes of the Lawson tetrahedra $\Ombold$ 
and contain the edges of the tetrahedra in $\Omboldu$.   
\\ 
(iii) 
Let 
$\mathbf{C} := \mathbf{C}_Q  \cup  \mathbf{C}_{axes}$, 
a collection of $2km$ great circles.  
\\ 
(iv) 
Let 
$\mathbf{\Sigma} := \{\Sigma^{ j\pi /k }\}_{j\in\Z} \cup \{\Sigma_{ i\pi /m }\}_{i\in\Z} $, 
a collection of $k+m$ great two-spheres  
bisecting the tetrahedra in $\Ombold$ and containing the faces of the tetrahedra in $\Omboldu$. 
\end{definition}

Note that by \ref{T:lawson} and \ref{Dsym0} 
the great circles in $\mathbf{C}$ are circles of symmetry of $M$ and 
the great two-spheres in $\mathbf{\Sigma}$ are great two-spheres of symmetry of $M$. 
Moreover  
${\bigcup} \mathbf{C}_Q = {\bigcup}_{i,j\in\Z} Q_i^j = {\bigcup}_{i,j\in2\Z}  Q_i^j \subset M$ 
and  
$\bigcup \mathbf{C}_{axes} =  ( \bigcup_{j\in\Z} \Sigma^{ j\pi /k } ) \bigcap ( \bigcup_{i\in\Z} \Sigma_{ i\pi /m } )$.

\begin{definition}[Subgroups of $\Gsym^M$] 
\label{Dsubgroups}
(i) Let $\GrpM:=\Gsym^{M,C}$ (recall \ref{Gsym}). 
\\
(ii) 
Let 
$\GrpSigma$ be the group generated by $\{ \refl_\Sigma : \Sigma\in \mathbf{\Sigma} \}$.  
\\ 
(iii) 
Let 
$\GrpCQ$, $\GrpCa$, $\GrpC$ be the groups 
generated by 
$\{\refl_{C'} : C' \in \mathbf{C}_Q \}$, 
$\{\refl_{C'} : C' \in \mathbf{C}_{axes} \}$ 
and $\{\refl_{C'} : C' \in \mathbf{C} \}$ respectively.  
\\ 
(iv) 
Let 
$\GrpMp$ be the group generated by 
$\{ \refl_{C'}:C'\in \mathbf{C}_{axes} \} \, {\scriptstyle{\bigcup}} \, \{ \refl_{C'} \circ \refl_\Sigma : C' \in \mathbf{C}_Q, \Sigma\in \mathbf{\Sigma}  \}$. 
\end{definition}

\begin{lemma}[Symmetries of $M$] 
\label{L:GsymM}
The following hold if $m \geq 3$ and $k \geq 2$. 
\\  
(i) 
$\Gsym^M = \Gsym^{\Ombold_e} = \Gsym^{{\scriptstyle\bigcup}\Ombold_e} = \Gsym^{\Ombold_o} = \Gsym^{{\scriptstyle\bigcup}\Ombold_o} $ (recall \ref{Gsym}). 
\\
(ii) 
If $m\neq k$, then $\Gsym^{M}=\GrpM$;   
and if $m= k>2$, 
then $\GrpM$ is an index-$2$ subgroup of $\Gsym^{M}$ and 
$\forall\Om\in\Ombold$ 
$\GrpM\cup \Gsym^{\Om}$ generates $\Gsym^{M}$.  
\\
(iii) 
$\GrpSigma$ acts simply transitively on $\Omboldu$ 
and $\GrpCQ$ acts simply transitively on $\Ombold_e$ (or $\Ombold_o$).  
Moreover $\forall p\in\Sph^3$ and $\forall\Omu\in\Omboldu$, $\Omu$ intersects $\GrpSigma p$ exactly once. 
\\
(iv) 
$\Gsym^M$ acts transitively on $\Ombold_e$, $\Ombold_o$, and $\Omboldu$,  
having $\Gsym^\Om$ as the stabilizer of any $\Om\in\Ombold$ 
and having an index-$2$ subgroup of $\mathscr{G}_{{sym}^{\phantom{a}}}^{\,\Omu}\!$ as the stabilizer of any $\Omu\in \Omboldu$.  
\end{lemma}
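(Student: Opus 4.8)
The plan is to prove the four parts in the order (iii), (i), (iv), (ii): part (iii) is pure spherical geometry and finite group theory, and the rest build on it together with the recovery of the whole configuration from $M$. Throughout I would use that the remark after \ref{collections} gives $\GrpSigma\subseteq\Gsym^M$ and $\GrpC\subseteq\Gsym^M$, hence $\GrpCQ,\GrpCa\subseteq\Gsym^M$. For (iii) I would read off from the coordinates of \ref{D:coordinates} that each $\refl_{\Sigma^{j\pi/k}}$ is the identity on $C$ and a reflection of $C^\perp$ fixing $\ptt^j$, while each $\refl_{\Sigma_{i\pi/m}}$ does the opposite; thus $\GrpSigma$ is the product of dihedral groups $D_m,D_k$ of orders $2m,2k$ acting separately on $C$ and $C^\perp$, of order $4mk=|\Omboldu|$. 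Since each $\Omu=\Om_i^j\in\Omboldu$ is the join of its $C$-arc $\overline{\ptt_{i-1/2}\ptt_{i+1/2}}$ (consecutive integer endpoints) with its $C^\perp$-arc, and $D_m$ permutes the $2m$ such arcs simply transitively (transitive since reflections switch the parity of the two rotation orbits, free since the arc-midpoint reflection is not in $D_m$), the product acts simply transitively on $\Omboldu$. Writing $p\in\Sph^3$ in join coordinates $(\alpha,\beta,t)$ with $\pp_\alpha\in C$, $\pp^\beta\in C^\perp$, and geodesic parameter $t$, the group ignores $t$ while each arc is a fundamental domain for its dihedral factor, which yields the last assertion of (iii). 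The case of $\GrpCQ$ is parallel: $\refl_{C_{i\pi/m}^{j\pi/k}}=\refl_{\Sigma_{i\pi/m}}\circ\refl_{\Sigma^{j\pi/k}}$ for half-integer $i,j$, so $\GrpCQ$ is the index-two ``diagonal'' subgroup of $D_m\times D_k$ of (rotation, rotation) and (reflection, reflection) pairs, of order $2mk=|\Ombold_e|$; since $\refl_{C_{i\pi/m}^{j\pi/k}}$ sends $\Om_i^j$ to $\Om_{2i_0-i}^{2j_0-j}$ and preserves the parity of $i+j$, the same count gives simple transitivity on $\Ombold_e$ and on $\Ombold_o$.

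The step I expect to be the main obstacle is the inclusion $\Gsym^M\subseteq\Gsym^{\Ombold_e}$ of part (i), that is, that every symmetry of $M$ respects the combinatorial tessellation; I would get this by recovering $C\cup C^\perp$ from $M$. By \ref{umb} the umbilics of $M$ are the $2k\ge4$ points $\ptt^j$ ($j\in\frac12+\Z$) on $C^\perp$, of degree $m-2$, together with (when $k>2$) the points $\ptt_i$ on $C$, of degree $k-2$; any $g\in\Gsym^M$ permutes umbilics preserving degree, and since the umbilics on $C^\perp$ span $\Span(C^\perp)$ it follows that $g$ preserves $C^\perp$ and hence $C=(C^\perp)^\perp$, even when $k=2$. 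Next, one checks directly that $M\cap(C\cup C^\perp)$ is exactly the set of half-integer marked points (the boundaries $Q_i^j$ pass through the $\ptt_{i\pm1/2}$ and $\ptt^{j\pm1/2}$ but omit the $C$- and $C^\perp$-edges), which are the vertices of $\Ombold$; as $g$ preserves this set it preserves their arc-midpoints, the vertices of $\Omboldu$, hence all of $\mathbf{\Sigma}$ and $\mathbf{C}$, hence both tessellations; finally $g$ preserves parity because the even cells are exactly those meeting $M$. This gives $\Gsym^M\subseteq\Gsym^{\Ombold_e}\cap\Gsym^{\Ombold_o}$. For the reverse inclusion, $g\in\Gsym^{\Ombold_e}$ permutes the vertices of even cells (half-integer marked points), and because two distinct great circles meet in at most two points the $2m\ge6$ images cannot split between $C$ and $C^\perp$, so again $g$ preserves $C\cup C^\perp$; then $g$ carries each $Q_i^j$ to some $Q_{i'}^{j'}$, and the uniqueness in \ref{T:lawson} forces $g(D_i^j)=D_{i'}^{j'}$, whence $g(M)=M$. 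Since adjacent cells of $\Ombold$ have opposite parity, $\bigcup\Ombold_e$ recovers its own cells and $\bigcup\Ombold_o=\overline{\Sph^3\setminus\bigcup\Ombold_e}$, giving the remaining equalities in (i).

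With (i) and (iii) settled, (iv) is bookkeeping. Transitivity of $\Gsym^M$ on $\Ombold_e$, $\Ombold_o$, and $\Omboldu$ is inherited from the simply transitive subgroups $\GrpCQ$ and $\GrpSigma$. For $\Om\in\Ombold$ the generating reflections of $\Grp^\Om$ (see \ref{DgrpO}) lie in $\GrpSigma\cup\GrpCa$, and when $m=k$ the extra $C\leftrightarrow C^\perp$ symmetry of $\Om$ is also a symmetry of $M$; thus $\Gsym^\Om\subseteq\Gsym^M$ and $\stab_{\Gsym^M}(\Om)=\Gsym^M\cap\Gsym^\Om=\Gsym^\Om$. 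For $\Omu\in\Omboldu$ the two bisecting spheres $\Sigma_{i\pi/m},\Sigma^{j\pi/k}$ now have half-integer indices and reflection in either reverses parity of cells, so does not preserve $M$; only the axis reflection $\refl_{C_{i\pi/m}^{j\pi/k}}\in\GrpCQ$ and the identity survive, so $\stab_{\Gsym^M}(\Omu)$ is the claimed index-two subgroup of $\Gsym^{\Omu}$ (consistently, $|\Ombold_e|\,|\Gsym^\Om|=|\Omboldu|\cdot2$).

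Lastly, for (ii), sending $g\in\Gsym^M$ to whether it fixes or exchanges $C$ and $C^\perp$ defines a homomorphism $\Gsym^M\to\Z_2$ with kernel $\Gsym^M\cap\Gsym^C=\GrpM$. When $m\ne k$ the distinct umbilic degrees $m-2,k-2$ (or, when $k=2$, the absence of umbilics on $C$) prevent any exchange, so the map is trivial and $\Gsym^M=\GrpM$; when $m=k>2$ the isometry exchanging the two coordinate planes is a symmetry of $M$, so the map is onto, $\GrpM$ has index two, and since this exchange is exactly the extra element of $\Gsym^\Om$ beyond $\Grp^\Om$, the set $\GrpM\cup\Gsym^\Om$ meets the nontrivial coset and hence generates $\Gsym^M$.
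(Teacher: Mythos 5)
Your overall strategy matches the paper's: recover the vertex set $\bigcup_{\Om\in\Ombold}V_\Om=M\cap(C\cup C^\perp)$ from the umbilic classification of Corollary \ref{umb}, use the uniqueness in Theorem \ref{T:lawson} for the reverse inclusion in (i), distinguish $C$ from $C^\perp$ for (ii), and do (iii) and (iv) by direct group theory (your explicit $D_m\times D_k$ computation for (iii) is a fine alternative to the paper's transitivity-plus-trivial-stabilizer argument). However, two of your assertions fail precisely in the case $m=k>2$, which the lemma is required to cover.

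First, in (i) you claim every $g\in\Gsym^M$ ``preserves $C^\perp$'' because umbilic degrees are preserved. When $m=k>2$ the degrees on the two circles coincide ($m-2=k-2$), so degree preservation gives nothing, and indeed $\Gsym^M$ then contains elements exchanging $C$ and $C^\perp$ --- that is exactly the content of the second clause of (ii). What you actually need (preservation of the vertex set) still holds, since for $k>2$ the full umbilic set equals $M\cap(C\cup C^\perp)$; but the intermediate claim must be weakened to preservation of $C\cup C^\perp$, which for $m=k$ requires the ``a great circle other than $C,C^\perp$ meets the marked points in at most four points'' argument that you invoke only later, for the reverse inclusion. Second, in (iv) you identify $\stab_{\Gsym^M}(\Omu)$ as ``only the axis reflection and the identity,'' a group of order $2$. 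That is the correct index-$2$ subgroup of $\Gsym^{\Omu}$ only when $m\neq k$, where $|\Gsym^{\Omu}|=4$ by Lemma \ref{Om:p}(iii)(a); when $m=k>2$ one has $|\Gsym^{\Omu}|=8$, the stabilizer must have order $4$, and it also contains the two parity-preserving elements of $\Gsym^{\Omu}$ that exchange $C$ and $C^\perp$. Your orbit--stabilizer count would detect and repair this, but as written the identification of the stabilizer is wrong in that case. Everything else checks out.
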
 

\begin{proof} 
We have
$
 \Gsym^{\Ombold_e} = \Gsym^{\Ombold_o}
   \leq \Gsym^{\bigcup \Ombold_e} = \Gsym^{\bigcup \Ombold_o}
$
as immediate consequences of the definitions.
On the other hand any element of $\Gsym^{\bigcup \Ombold_e}$
must take edges of tetrahedra in $\Ombold$
to edges of tetrahedra in $\Ombold$,
whence it follows that $\Gsym^{\Ombold_e} = \Gsym^{\bigcup \Ombold_e}$.
For (i) it remains to prove that $\Gsym^M = \Gsym^{\Ombold_e}$.
Since $m > 2$, an element of $O(4)$ preserves $\Ombold_e$
if and only if it preserves the collection of $Q_i^j$ with $i+j$ even.
By Theorem \ref{T:lawson} we therefore have
$\Gsym^{\Ombold_e} \leq \Gsym^M$,
and to prove the reverse containment it suffices to show
that $\Gsym^M$ preserves $\mathbf{C}_Q$
(since $M \cap \Om_i^j = \emptyset$ for $i+j$ odd).
In fact, since $\bigcup \mathbf{C}_Q \subset M$
(and $k, m \geq 2$),
by the maximum principle
and the construction of $M$ in Theorem \ref{T:lawson}
it is enough to establish that $\Gsym^M$ preserves
the collection
$
 \bigcup_{\Om \in \Ombold} V_{\Om}
 =
 \{\ptt_i, \, \ptt^j\}_{i,j \in \frac{1}{2} + \Z}
$
of vertices of quadrilaterals $Q_i^j$ with $i, j \in \Z$.
Now by Corollary \ref{umb} we know that
$M$ has umbilics on $C^\perp$ and no umbilics off $C \cup C^\perp$,
but, again by the maximum principle and the construction of $M$,
we have $M \cap (C \cup C^\perp) = \bigcup_{\Om \in \Ombold} V_\Om$.
Thus $\Gsym^M$ preserves $\bigcup_{\Om \in \Ombold} V_{\Om}$,
completing the proof of (i).
Note in passing (although not needed for the proof) that these arguments fail when $m=k=2$, in which case 
$\Gsym^{\{Q_i^j\}_{i+j\in 2\Z} }  < \Gsym^{\Ombold_e}$,  
$\Gsym^{\{Q_i^j\}_{i+j\in 2\Z} }  < \Gsym^{M}$  
and 
$\Gsym^{\Ombold_e} \ne \Gsym^{M}$ hold.  

As one consequence of the preceding,
$\Gsym^M$ preserves $C \cup C^\perp$.
When $k \neq m$,
$C \cap M$ and $C^\perp \cap M$
have different cardinalities,
so any symmetry of $M$ preserves each of $C$ and $C^\perp$,
proving the first clause of (ii).
When $k=m$, note that
the stabilizer in $O(4)$ of any $\Om \in \Ombold$
preserves $\Ombold_e$
and includes symmetries exchanging $C$ and $C^\perp$,
but any element of $O(4)$ preserving $C \cup C^\perp$
either exchanges these circles or preserves each one,
completing the proof of (ii).

Since each edge of each $\Om \in \Ombold$
lies in a circle in $\mathbf{C}_Q$
and each face of each $\Omu \in \Omboldu$
lies in a sphere in $\mathbf{\Sigma}$,
the transitivities asserted in (iii) are clear.
It is also clear from the definitions
that for each $\Om \in \Ombold$
and each $\Omu \in \Omboldu$
the intersections of $\GrpCQ$ with $\Gsym^{\Om}$
and of $\GrpSigma$ with $\Gsym^{\Omu}$
are both trivial, 
establishing that both actions are simply transitive.
Next, every $p \in \Sph^3$ lies in
at least one $\underline{\Omega} \in \underline{\mathbf{\Omega}}$.
If $\mathsf{T} \in \Grp^{\mathbf{\Sigma}}$ takes
$p \in \underline{\Omega}$ to another point in $\underline{\Omega}$,
then $\mathsf{T}\underline{\Omega} = \underline{\Omega}'$
for some
$\underline{\Omega}' \in \underline{\mathbf{\Omega}}$
with
$
 \underline{\Omega} \cap \underline{\Omega}'
 \neq
 \emptyset
$:
either $\underline{\Omega}' = \underline{\Omega}$
or $\underline{\Omega}'$ and $\underline{\Omega}$
share just one face, edge, or vertex.
For any two such adjacent (or identical) tetrahedra
in $\underline{\mathbf{\Omega}}$
there is an evident element of $\Grp^{\mathbf{\Sigma}}$
taking one to the other and fixing their intersection pointwise,
but we have already observed that each
$\underline{\Omega} \in \underline{\mathbf{\Omega}}$
has trivial stabilizer in $\Grp^{\mathbf{\Sigma}}$,
completing the proof of (iii).

Turning to (iv), clearly both $\GrpCQ$ and $\GrpSigma$ preserve $\mathbf{\Omega}_e$, 
so by (i) both $\GrpCQ$ and $\GrpSigma$ are subgroups of $\Gsym^M$;
the transitivities claimed in (iv) then follow from (iii).
Finally,
the stabilizer in $O(4)$ of any $\Om \in \Ombold$
preserves $\Ombold_e$,
so agrees with the stabilizer of $\Om$ in $\Gsym^M$,
while instead $\Ombold_e$ is preserved by
exactly half of the elements of the stabilizer in $O(4)$
of any $\Omu \in \Omboldu$, ending the proof.
\end{proof} 

\begin{lemma}[$\GrpM$ and subgroups] 
\label{L:GM}
(i) $\GrpM$ has order $8km$  
and acts transitively on $\Ombold_e$, $\Ombold_o$, and $\Omboldu$,  
having $\Grp^{\Om}$ as the stabilizer of any $\Om\in\Ombold$ and $\HGrp^{\Omu}$ as the stabilizer of any $\Omu\in\Omboldu$ 
(recall \ref{DgrpO}).  
\\ 
(ii) 
$\GrpSigma=\{\gamma\in \GrpM: \gamma \text{ preserves the sides of } M\}$,  
$\GrpMp = \{\gamma\in \GrpM: \gamma \text{ preserves the orientation of } M\}$,  
\\
and $\GrpC=\GrpM\cap SO(4)$. 
\\ 
(iii) $\GrpSigma$, $\GrpC$, and $\GrpMp$ are distinct index-$2$ subgroups of $\GrpM$ and so each of them has order $4km$ and any two of them generate $\GrpM$.  
\\ 
(iv) 
$\GrpCa = \GrpSigma \cap  \GrpC = \GrpSigma  \cap \GrpMp = \GrpC \cap \GrpMp$.  
\\ 
(v) 
$\GrpCQ$ and $\GrpCa$ 
are subgroups of $\GrpC$ of index $2$ and 
$\GrpCQ\cap\GrpCa$ is a subgroup of 
$\GrpCQ$ (or $\GrpCa$) of index $2$. 
\end{lemma}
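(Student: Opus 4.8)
The plan is to realize the three subgroups $\GrpSigma$, $\GrpC$, and $\GrpMp$ as the kernels of three sign homomorphisms on $\GrpM$ and then to deduce everything by elementary group theory from the orders already available in \ref{L:GsymM}. Since $M$ is a connected two-sided surface in the oriented $\Sph^3$, each $\gamma\in\GrpM$ carries three signs: $a(\gamma)=\pm1$ according as $\gamma\in SO(4)$ or not (the ambient orientation), $o(\gamma)=\pm1$ according as $\gamma$ preserves or reverses a fixed orientation of $M$, and $s(\gamma)=\pm1$ according as $d\gamma$ preserves or reverses the global unit normal of $M$ (the sides). Each of $a,s,o\colon\GrpM\to\{\pm1\}$ is a homomorphism, and because the co-orientation of $M$ equals the ambient orientation divided by the orientation of $M$ one has the relation $a=so$; in particular $a,s,o$ together with the trivial character span a copy of $\Z_2\times\Z_2$. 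I will identify $\GrpSigma=\ker s$, $\GrpC=\ker a$, and $\GrpMp=\ker o$.

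For (i) I would first compute the stabilizer in $\GrpM$ of an $\Om\in\Ombold$. By \ref{L:GsymM}(iv) this stabilizer is $\Gsym^{\Om}\cap\Gsym^{C}$. When $m\neq k$ we have $\Gsym^{\Om}=\Grp^{\Om}$ by \ref{Om:p}(iii), and a direct check in the coordinates of \ref{D:coordinates} shows every element of $\Grp^{\Om}$ preserves $C$; when $m=k>2$ the extra elements of $\Gsym^{\Om}$ interchange $C$ and $C^\perp$ and so are removed upon intersecting with $\Gsym^{C}$. In either case the stabilizer is exactly $\Grp^{\Om}$, of order $4$. Since $\GrpCQ\le\GrpM$ acts simply transitively on $\Ombold_e$ and on $\Ombold_o$, and $\GrpSigma\le\GrpM$ acts simply transitively on $\Omboldu$ (both by \ref{L:GsymM}(iii)), the group $\GrpM$ acts transitively on each of these. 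Orbit-stabilizer with $|\Ombold_e|=2km$ then gives $|\GrpM|=4\cdot2km=8km$, while transitivity on $\Omboldu$ (with $4km$ elements) forces the stabilizer of any $\Omu$ to have order $2$; as the axial reflection generating $\HGrp^{\Omu}$ lies in this stabilizer (being a symmetry of $\Omu$, of $M$, and of $C$), the stabilizer is $\HGrp^{\Omu}$, completing (i).

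The heart of the matter is the evaluation of $(a,s,o)$ on the generating reflections, which is where the geometry enters. A reflection $\refl_{C'}$ across any great circle lies in $SO(4)$ (it negates a $2$-plane), so $a\equiv+1$ on $\GrpC$ and hence $\GrpC\subseteq\ker a$. For $\Sigma\in\mathbf{\Sigma}$ we have $a(\refl_\Sigma)=-1$; because $M$ meets each sphere of symmetry orthogonally, the unit normal of $M$ is tangent to $\Sigma$ along $M\cap\Sigma$, so $\refl_\Sigma$ preserves the sides, whence $s(\refl_\Sigma)=+1$, $o(\refl_\Sigma)=-1$, and $\GrpSigma\subseteq\ker s$. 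For $C'\in\CQ$ the circle lies in $M$, so at a point of $C'$ the reflection fixes the tangent to $C'$ and negates both the remaining tangent direction of $M$ and the normal, giving $o(\refl_{C'})=s(\refl_{C'})=-1$ with $a=+1$. Finally, by \ref{L:obs}(vi) each $C'\in\Ca$ equals $\Sigma_{i\pi/m}\cap\Sigma^{j\pi/k}$ with orthogonal intersection, so $\refl_{C'}=\refl_{\Sigma_{i\pi/m}}\circ\refl_{\Sigma^{j\pi/k}}$; this shows $\GrpCa\le\GrpSigma$ and that every generator of $\GrpMp$ has $o=+1$, i.e.\ $\GrpMp\subseteq\ker o$. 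The same factorization, with $\refl_{\Sigma_{i\pi/m}}$ and $\refl_{\Sigma^{j\pi/k}}$ acting on the orthogonal circles $C$ and $C^\perp$ respectively, identifies $\GrpSigma\cong D_m\times D_k$ and exhibits $\GrpCa$ as the order-$2km$ subgroup of matched reflection parities.

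To finish I would close the loop by orders. The characters $a,s,o$ are nontrivial (witnessed by $\refl_{C'}$, $C'\in\CQ$, for $s$ and $o$, and by any $\refl_\Sigma$ for $a$) and pairwise distinct, so $\ker a,\ker s,\ker o$ are three distinct index-$2$ normal subgroups of $\GrpM$, each of order $4km$, and any two distinct index-$2$ subgroups generate the whole group, giving (iii). Since $|\GrpSigma|=4km$ by \ref{L:GsymM}(iii) and $\GrpSigma\subseteq\ker s$, we get $\GrpSigma=\ker s$; since $\GrpCQ$ and $\GrpCa$ are distinct order-$2km$ subgroups of $\ker a$ (distinct because $\GrpCa\subseteq\ker s$ while $\GrpCQ\not\subseteq\ker s$), their product fills out $\ker a$, so $\GrpC=\ker a=\GrpM\cap SO(4)$; and $\GrpMp$ strictly contains $\GrpCa$ (it contains the elements $\refl_{C'}\circ\refl_\Sigma$ with $a=-1$), so its order exceeds $2km$ and, as a subgroup of $\ker o$, equals $4km$, giving $\GrpMp=\ker o$ and completing (ii). Part (iv) is then the identity $\ker a\cap\ker s=\ker s\cap\ker o=\ker a\cap\ker o=\GrpCa$ (the common kernel $K$ has order $2km$, contains $\GrpCa$, and has the same order), and part (v) follows from $|\GrpCQ|=|\GrpCa|=2km=\tfrac12|\GrpC|$ together with $|\GrpCQ\cap\GrpCa|=|\GrpCQ|\,|\GrpCa|/|\GrpC|=km$. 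I expect the main obstacle to be the careful verification of the orientation signs, and in particular the fact that reflection across a sphere of symmetry preserves the sides of $M$ while reversing its orientation, since once the signs are pinned down the remainder is bookkeeping with index-$2$ subgroups.
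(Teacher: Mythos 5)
Your proof is correct and follows essentially the same route as the paper's: both identify $\GrpSigma$, $\GrpC$, and $\GrpMp$ by how their elements act on the sides of $M$, the orientation of $M$, and the ambient orientation (your characters $a,s,o$ with $a=so$ are precisely the paper's observation that $\refl_\Sigma$ preserves sides while $\refl_{C'}$ with $C'\in\mathbf{C}_Q$ reverses them, and that the intersection of any two of the three subgroups equals the intersection of all three), and both then reduce the equalities in (ii)--(v) to order counts against $|\GrpM|=8km$. The only minor divergence is that the paper pins down $|\GrpC|=|\GrpMp|=4km$ via their transitive actions on $\Ombold_e$ with order-$2$ stabilizers and identifies $\GrpCQ\cap\GrpCa$ explicitly as a product of cyclic rotation groups, whereas you squeeze the orders between $\GrpCQ$, $\GrpCa$ and the index-$2$ kernels and use the product formula for (v); both are valid.
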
 

\begin{proof} 
It is clear from the definitions that
each of $\GrpSigma$, $\GrpC$, and $\GrpMp$
preserves each of $M$ and $C$,
so, using Lemma \ref{L:GsymM},
each of these three groups is a subgroup of $\Grp$.
In particular the transitivity assertions in (i)
follow from Lemma \ref{L:GsymM}(iii).
Of course the stabilizer $\Gsym^\Om$ in $O(4)$ of any $\Om \in \Ombold$
preserves $\Ombold_e$, so is a subgroup of $\Gsym^M$,
and the subgroup of $\Gsym^\Om$ preserving $C$
is easily seen to be precisely $\Grp^\Om$.
Similarly for each $\Omu \in \Omboldu$
the subgroup of $\Gsym^{\Omu}$ preserving $M$ and $C$
is $\HGrp^{\Omu}$.
The proof of (i) is now completed by observing
that $\Ombold_e$ consists of $2km$ tetrahedra
each of whose stabilizers ($\Grp^{\Om}$ for some $\Om \in \Ombold_e$)
in $\Grp$ has order $4$
(or alternatively that $\Omboldu$ has cardinality $4km$
 and each $\HGrp^{\Omu}$ order $2$).

Likewise, again using \ref{L:GsymM}(iii),
we obtain the proof of (iv),
and we find that $\GrpC$ and $\GrpMp$,
which both act transitively on $\Ombold_e$
with stabilizer of order $2$,
have order $4km$,
completing the proof of (iii),
while $\GrpCQ$ has order $2km$,
proving the first clause of (vi),
$\GrpCQ$ and $\GrpCa$
being isomorphic.
The second clause of (vi)
then follows from the observation
that $\GrpCQ \cap \GrpCa$ is the direct product
of an order-$k$ cyclic group of rotations about $C$
with an order-$m$ cyclic group of rotations about $C^\perp$.

Next, since at each $\ptt_i$ with $i \in \frac{1}{2}+\Z$
there are $k \geq 2$ great circles on $M$ meeting $C$ orthogonally,
a normal to $M$ a each such point must point along $C$,
and likewise a normal to $M$ at each $\ptt^j$ with $j \in \frac{1}{2}+\Z$
must point along $C^\perp$.
Consequently each $\refl_{C'}$ with $C' \in \mathbf{C}_Q$
reverses the sides of $M$,
while each $\refl_{\Sigma}$ with $\Sigma \in \mathbf{\Sigma}$
preserves the sides of $M$.
Each of the three subgroups occurring as a right-hand side of (ii)
is therefore proper and so has index $2$.
Moreover it is clear that (ii) holds if each equality $=$
is replaced by containment $\leq$,
but equality then follows from (iii),
completing the proof of (ii).

By the characterization in (ii)
the intersection of any two of $\GrpSigma$, $\GrpC$, and $\GrpMp$
is the intersection of all three.
For (v) it therefore suffices to prove the first equality.
That $\GrpCa \leq \GrpSigma \cap \GrpC$ is immediate from the definitions.
By (ii) $\GrpSigma \cap \GrpC$ consists of products of even number
of reflections $\refl_{\Sigma}$ with $\Sigma \in \mathbf{\Sigma}$,
but a product of any two such reflections is indeed a rotation in $\GrpCa$,
establishing (v) and ending the proof.
\end{proof} 

\begin{remark}[Blow-ups and gluing constructions] 
\label{R:1} 
It is possible to prove that if one scales up the metric by a factor of $m^2$, 
then the limit of $M[m,k]\subset\Sph^3$ as $m\to\infty$, 
while keeping $k$ and a point of $C$ fixed, 
is a singly periodic complete embedded minimal surface $\Sk[k]\subset\R^3$, 
where $C$ converges to a straight line in $\R^3$ and $\Sph^3$ to $\R^3$ equipped with the Euclidean metric. 
Here $\Sk[k]$ is the maximally symmetric singly periodic Scherk surface \cite{Scherk} if $k=2$, which is given in Cartesian coordinates $(x_1,x_2,x_3)$ 
by the simple equation $\sinh x^1 \sinh x^2 = \sin x^3$; 
or a maximally symmetric Scherk-Karcher tower if $k>2$ \cite{Ka1}. 
This procedure can be reversed to construct Lawson surfaces of high $m$ by a gluing construction (see also \cite[Remark 6.22]{LindexI}).
Such gluing constructions can be performed also on intersecting Clifford tori as in \cite{kapouleas:wiygul}. 
Finally note that if this blow-up process is instead applied around a point of $M[m,k]$ on $\T$, 
the Clifford torus whose points are equidistant from $C$ and $C^\perp$,
where one takes $m,k\to\infty$ while the ratio $m/k$ tends to a nonzero finite limit, 
then $M$ tends to a \emph{doubly} periodic Scherk surface \cite{Scherk} with the periods spanning a plane which is the limit of $\T$. 
It should be possible to carry out gluing constructions by reversing this also. 
\qed 
\end{remark}

\section{The main Theorem} 
\label{S:main}

In this section we assume given a connected closed embedded (and hence orientable) minimal surface $S$ in $\Sph^3$. 
Recall that we assume $k \geq 2$ and $m \geq 3$. 

\begin{lemma} 
\label{L:1} 
If $\GrpSigma\subset\Gsym^S$ and $S\notin \mathbf{\Sigma}$, then the following hold $\forall \Omu\in \Omboldu$ 
(recall \ref{tessellations}, \ref{collections}, and \ref{Dsubgroups}). 
\\ 
(i) 
The intersection of $S$ with each face of $\Omu$ is non-empty 
and is a union of segments or circles which are geodesic on $S$ 
and we call \emph{sides} of $\partial(S\cap\Omu)$. 
The endpoints of the sides (if any) lie on the edges of $\Omu$ 
and we call them \emph{vertices} of $\partial(S\cap\Omu)$. 
\\
(ii) 
$S\cap V_\Omu=\emptyset$ (recall \ref{TEV}) and the intersection of $S$ with any edge of $\Omu$ is orthogonal (if nonempty). 
\\ 
(iii) 
$S\cap\Omu$ is connected. 
\end{lemma}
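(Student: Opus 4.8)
The plan is to exploit the simply transitive action of $\GrpSigma$ on $\Omboldu$ (Lemma \ref{L:GsymM}(iii)) together with the reflection structure of the faces of $\Omu$, which are contained in the two-spheres of $\mathbf{\Sigma}$. Since $\GrpSigma\subset\Gsym^S$, the surface $S$ is invariant under each reflection $\refl_\Sigma$ with $\Sigma\in\mathbf{\Sigma}$, so the fixed-point set $\Sigma$ meets $S$ in a set invariant under $\refl_\Sigma$. The key elementary fact is that where a minimal surface $S$ meets a totally geodesic two-sphere $\Sigma$ of reflectional symmetry, the intersection $S\cap\Sigma$ is a one-dimensional set that is geodesic in $S$: this is the standard consequence of the reflection fixing both $S$ and $\Sigma$, forcing the second fundamental form of $S$ in the normal direction to vanish along the intersection. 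This gives the ``sides are geodesic'' assertion of (i). I would note that each face of $\Omu$ lies in one of the spheres $\Sigma\in\mathbf{\Sigma}$ (by \ref{Om:p}(i) applied to the half-integer indexing of $\Omboldu$), so $S$ meeting a face is exactly $S$ meeting that $\Sigma$, restricted to the face.

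First I would establish (ii), as it is logically prior and makes (i) clean. The edges of each $\Omu\in\Omboldu$ lie on the circles of $\mathbf{C}_{axes}$ (by \ref{collections}(ii), since the tetrahedra in $\Omboldu$ have their edges on the axes of $\Ombold$), and each such edge is the transverse intersection $\Sigma\cap\Sigma'$ of two distinct spheres of $\mathbf{\Sigma}$ (by \ref{L:obs}(vi), the circles $C_\phi^{\phi'}=\Sigma_\phi\cap\Sigma^{\phi'}$), with orthogonal intersection. A vertex of $\Omu$ lies on three or more such spheres simultaneously, i.e.\ on $C\cup C^\perp$, and these are exactly the points $\{\ptt_i,\ptt^j\}$. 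If $S$ passed through a vertex $p$, then $S$ would be invariant under the several reflections in the spheres through $p$; since these reflections generate (via \ref{L:GsymM}, \ref{L:GM}) symmetries fixing $p$ and acting irreducibly enough on $T_p\Sph^3$, the normal line to $S$ at $p$ would have to be simultaneously invariant under incompatible reflections, which is impossible unless $S\subset\Sigma$ for some $\Sigma$, contradicting $S\notin\mathbf{\Sigma}$. This rules out $S\cap V_\Omu\neq\emptyset$. For the orthogonality of $S$ along an edge $C'\in\mathbf{C}_{axes}$: the edge is fixed pointwise by $\refl_{\Sigma}$ and $\refl_{\Sigma'}$ for the two bounding spheres, hence also by their composition $\refl_{C'}$; at a point $q\in S\cap C'$ (not a vertex, by the above) the tangent plane $T_qS$ is invariant under $\refl_{\Sigma}$ and $\refl_{\Sigma'}$, and since the orbits and normal directions are constrained by these two orthogonal reflections one concludes $C'$ meets $S$ orthogonally. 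The non-emptiness in (i) I would deduce from a topological/degree argument: $S$ is connected and closed, and because $\GrpSigma$ acts simply transitively on $\Omboldu$ with $\Omu$ a fundamental domain, $S$ must enter the interior of each $\Omu$ and hence cross its faces.

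For (iii), the connectedness of $S\cap\Omu$, I would argue by a reflection/doubling argument using the simply transitive action. If $S\cap\Omu$ had two components, then reflecting across the faces of $\Omu$ and using that $\GrpSigma$ tiles $\Sph^3$ by copies of $\Omu$, the images of these components would assemble into the connected closed surface $S$ in a way that either disconnects $S$ (contradiction) or forces the two components to be identified across a face, which by the geodesic boundary structure from (i) means they were in fact joined. More precisely: $S=\bigcup_{\gamma\in\GrpSigma}\gamma(S\cap\Omu)$, and adjacent tiles share exactly one face (by the adjacency analysis in the proof of \ref{L:GsymM}(iii)); a component of $S\cap\Omu$ glues to components in neighboring tiles precisely along the sides lying on shared faces, so the connectivity of $S$ propagates to a single equivalence class, forcing $S\cap\Omu$ to be connected. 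I expect the main obstacle to be (iii): making the gluing/connectivity argument rigorous requires carefully tracking how the sides on each face match under the generators of $\GrpSigma$ and verifying that no component of $S\cap\Omu$ can be ``self-contained'' with boundary sides that reflect back into the same tile. The geodesic boundary structure and the face-sharing combinatorics of $\Omboldu$ from the earlier lemmas are exactly what is needed to close this, but the bookkeeping is the delicate part.
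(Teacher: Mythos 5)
Your treatment of the local symmetry arguments in (i) and (ii) is essentially the paper's: reflection invariance forces $S$ to meet each $\Sigma\in\mathbf{\Sigma}$ orthogonally (the tangential case being excluded because it would force $S=\Sigma$ by symmetry and unique continuation), the intersection curves are geodesics of $S$, the normal to $S$ at a point of an edge must be tangent to the edge, and a vertex is excluded because the normal cannot be tangent to three linearly independent edge directions at once (your ``incompatible reflections'' phrasing is vaguer than this but recoverable). However, there are two genuine gaps. The first is the non-emptiness of $S\cap F$ for \emph{every} face $F$ of $\Omu$. Your ``degree argument'' --- that $S$ must enter the interior of each tile and ``hence cross its faces'' --- does not deliver this: transitivity of $\GrpSigma$ on $\Omboldu$ only gives $S\cap\Omu\neq\emptyset$ for each tile, and a connected surface meeting the interiors of two adjacent tiles need not cross the particular face they share. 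The paper instead first invokes the maximum principle (Frankel property) to get $S\cap\Sigma\neq\emptyset$ for each $\Sigma\in\mathbf{\Sigma}$, and then uses that $\Sigma$ is tessellated by faces of tetrahedra in $\Omboldu$ which are permuted \emph{transitively} by the stabilizer of $\Sigma$ in $\GrpSigma$; since $S\cap\Sigma$ is invariant under that stabilizer, meeting one face forces meeting all of them. Neither ingredient appears in your sketch, and without the per-face non-emptiness the downstream count of polygon sides in Corollary \ref{L:3} collapses.

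The second gap is (iii). Your gluing/bookkeeping argument is not closed, as you yourself concede, and the dichotomy you propose (``either disconnects $S$ or forces the two components to be identified across a face'') is false as stated: two components of $S\cap\Omu$ could a priori be joined by a path in $S$ wandering through many tiles without being glued across any face of $\Omu$ itself. The paper's argument is much shorter and avoids all bookkeeping: by Lemma \ref{L:GsymM}(iii) every $\GrpSigma$-orbit meets $\Omu$ in exactly one point, so there is a well-defined continuous folding map sending $x\in\Sph^3$ to the unique point of $\GrpSigma x$ in $\Omu$; applying it to a path in $S$ joining two points of $S\cap\Omu$ produces a path in $S\cap\Omu$ (using $\GrpSigma$-invariance of $S$), so $S\cap\Omu$ is path-connected. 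You should replace your reflection-chain argument with this projection; it is exactly the statement of \ref{L:GsymM}(iii) that makes it work.
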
 

\begin{proof} 
By the maximum principle $S$ intersects all great two-spheres and 
so $S\cap\Sigma\ne \emptyset$ $\forall\Sigma\in \mathbf{\Sigma}$. 
Moreover since $S$ is invariant under $\refl_\Sigma$, 
$T_pS$ is also invariant under $\refl_\Sigma$ for all $p\in S\cap\Sigma$. 
It follows that either $T_pS=T_p\Sigma$ or $T_pS$ is orthogonal to $T_p\Sigma$. 
In the former case, in the vicinity of $p$, $S$ can be described as a graph by a function over $\Sigma$.  
This function has to vanish by the symmetry. 
This implies that $S=\Sigma$, a contradiction to $S\notin \mathbf{\Sigma}$. 
We conclude therefore that $S$ intersects $\Sigma$ orthogonally. 
This implies that the intersection is transverse and therefore the intersection is an embedded curve 
with the unit normal of $S$ on this curve tangential to $\Sigma$. 
Because of the symmetry the intersection curve is geodesic on $S$. 

Each edge $E$ of a tetrahedron $\Omu\in \Omboldu$ is contained in two faces and therefore the normal to $S$ at a point of $E\cap S$ is tangential to $E$. 
A vertex then cannot be contained in $S$ because the normal to $S$ there would be tangential to all three edges containing the vertex, which is impossible. 
Now each face of a tetrahedron in $\Omboldu$ is contained in some $\Sigma\in \mathbf{\Sigma}$,  
and $\Sigma$ is tessellated by faces of tetrahedra in $\Omboldu$, 
which are acted on transitively by the stabilizer of $\Sigma$ in $\GrpSigma$. 
Since $S$ intersects $\Sigma$, $S$ has to intersect all faces on $\Sigma$.  
This concludes the proof of (i) and (ii). 

Finally, since $S$ is connected, 
any two points on $S\cap\Omu$ are connected by a path on $S$.  
Let $\gamma:[0,1]\to S$ be such a path. 
Using then \ref{L:GsymM}(iii) we define a path $\gamma':[0,1]\to \Omu\cap S$ by requesting $\forall t\in[0,1]$ that 
$\Omu\cap \GrpSigma \gamma(t) = \{ \gamma'(t)     \}$.  
$\gamma'$ is clearly continuous and (iii) follows. 
\end{proof} 

\begin{lemma} 
\label{L:2} 
If $S$ and $M$ have the same genus (which is $(k-1)(m-1)$ by \ref{T:lawson})  
and $\GrpSigma\subset\Gsym^S$, 
then $\forall \Omu\in \Omboldu$ 
\begin{equation} 
\label{E:L2} 
\sum_{p\in{S\cap E_\Omu}} (\pi-\theta_p) = 
(5-2\widetilde{g}-2\widetilde{b}-1/k-1/m)\pi,
\end{equation} 
where 
$E_\Omu$ is as in \ref{TEV}, 
$\theta_p$ denotes the dihedral angle of $\Omu$ at the edge containing $p$, 
$\widetilde{g}$ is defined to be twice the genus of $S\cap\Omu$, 
and finally $\widetilde{b}$ is the number of components of $\partial(S\cap\Omu)= S\cap\partial\Omu$.  
\end{lemma}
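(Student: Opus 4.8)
The plan is to recognize equation \eqref{E:L2} as a Gauss--Bonnet computation for the surface-with-boundary $S\cap\Omu$. By Lemma \ref{L:1}, $S\cap\Omu$ is a compact connected surface whose boundary $\partial(S\cap\Omu)=S\cap\partial\Omu$ consists of the sides lying in the four faces of $\Omu$, meeting the six edges of $\Omu$ orthogonally at the vertices (and missing $V_\Omu$). I would apply the Gauss--Bonnet theorem to $S\cap\Omu$ in the form
\begin{equation*}
\int_{S\cap\Omu} K_S \, dA \;+\; \int_{\partial(S\cap\Omu)} \kappa_g \, ds \;+\; \sum_{p} \bigl(\pi - \alpha_p\bigr) \;=\; 2\pi\, \chi(S\cap\Omu),
\end{equation*}
where the $\alpha_p$ are the exterior-angle contributions at the vertices. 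Three simplifications drive everything. First, since $S$ is minimal in $\Sph^3$, the Gauss equation gives $K_S = 1 + \det(\text{II}) = 1 - \tfrac12|\text{II}|^2$, but more usefully the ambient-curvature-plus-extrinsic decomposition combined with minimality must be handled so that the area integral contributes the ambient sectional curvature $1$ of $\Sph^3$ (the second fundamental form terms should combine to make the intrinsic integral computable); I would track this carefully since it is where the ``$5$'' and the genus term ultimately originate. Second, the sides are geodesic on $S$ by Lemma \ref{L:1}(i), so $\kappa_g \equiv 0$ and the boundary integral vanishes. Third, the orthogonal intersections at the vertices (Lemma \ref{L:1}(ii)) mean each interior angle of $S\cap\Omu$ at a vertex is $\pi/2$, so each exterior angle contributes $\pi/2$.

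Next I would convert $\chi(S\cap\Omu)$ into the stated quantities. With $\widetilde{b}$ boundary components and genus of $S\cap\Omu$ equal to $\widetilde{g}/2$, we have $\chi(S\cap\Omu) = 2 - 2(\widetilde{g}/2) - \widetilde{b} = 2 - \widetilde{g} - \widetilde{b}$, so the right-hand side is $2\pi(2-\widetilde{g}-\widetilde{b})$. The key bookkeeping step is then to evaluate the vertex-angle sum and relate it to $\sum_{p\in S\cap E_\Omu}(\pi-\theta_p)$. At each edge $E$ of $\Omu$ the two adjacent faces meet at the dihedral angle $\theta_E\in\{\pi/2,\pi/2,\pi/2,\pi/2,\pi/k,\pi/m\}$ by Lemma \ref{Om:p}(ii), and a point $p\in S\cap E$ is the common endpoint of two sides, one in each adjacent face, each meeting $E$ orthogonally. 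A local computation in the totally geodesic picture near $p$ shows that the two sides emanate making an angle governed by $\theta_E$, so that the exterior angle of $S\cap\Omu$ at $p$ equals $\pi-\theta_p$ (up to the orthogonality correction). This is the heart of the matter and the place where the factor $1/k+1/m$ will appear: the four edges of length $\pi/2$ contribute dihedral angle $\pi/2$ each, while the edge on $C$ contributes $\pi/k$ and the edge on $C^\perp$ contributes $\pi/m$.

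I expect the main obstacle to be the local vertex-angle analysis: verifying rigorously that the interior angle of $S\cap\Omu$ at a point $p\in S\cap E$ is exactly $\theta_p$ (the dihedral angle of $\Omu$ along that edge), given that $S$ meets each of the two adjacent faces orthogonally along curves that hit $E$ orthogonally. One must show the two boundary sides at $p$ span the same angle as the two faces do, which follows because the tangent line to each side at $p$ lies in the corresponding face and is orthogonal to $E$, and the orthogonal complements of $E$ within the two faces meet at precisely the dihedral angle $\theta_p$; reflection symmetry across the $\Sigma$'s containing the faces pins this down. The secondary technical point is confirming that the area integral of $K_S$ reduces cleanly — using minimality and the Gauss equation — to $\operatorname{Area}(S\cap\Omu)$ times the ambient curvature plus a term that is consistent with the constant ``$5$''; I would organize the computation so that once $\chi$, the vanishing $\kappa_g$, the right-angle vertices, and the edge dihedral angles are all substituted, the arithmetic collapses to \eqref{E:L2}.
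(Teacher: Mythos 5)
Your Gauss--Bonnet frame for $S\cap\Omu$ is the right one and matches the paper's: the boundary is geodesic on $S$ so $\int\kappa_g=0$, $\chi(S\cap\Omu)=2-\widetilde{g}-\widetilde{b}$, and the exterior angle at each $p\in S\cap E_\Omu$ is $\pi-\theta_p$ because the two sides meeting at $p$ lie in the two faces adjacent to the edge through $p$ and each is orthogonal to that edge, so they span exactly the dihedral angle $\theta_p$. (Your earlier sentence claiming ``each interior angle of $S\cap\Omu$ at a vertex is $\pi/2$'' is wrong and contradicts what you correctly say two paragraphs later; the orthogonality in Lemma \ref{L:1}(ii) is of $S$ with the \emph{edge}, not of the two sides with each other.)

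The genuine gap is the evaluation of $\int_{S\cap\Omu}K\,dA$. Your plan --- use minimality and the Gauss equation so that ``the area integral contributes the ambient sectional curvature $1$'' and the second fundamental form terms ``combine'' to produce the constant $5$ --- cannot work: minimality gives $K=1-\tfrac12|\mathrm{II}|^2$, and there is no local or pointwise way to evaluate $\int|\mathrm{II}|^2$ over the unknown surface $S$. The telltale sign is that your argument never uses the hypothesis that $S$ and $M$ have the same genus, which is where the numbers must come from. The paper's step here is global and topological: since $\GrpSigma\subset\Gsym^S$ acts simply transitively on the $4km$ tetrahedra of $\Omboldu$ (Lemma \ref{L:GsymM}(iii)), the pieces $S\cap\Omu'$ are pairwise congruent and tile $S$, so $\int_{S\cap\Omu}K=\frac{1}{4km}\int_S K=\frac{2\pi}{4km}\chi(S)$ by the closed Gauss--Bonnet theorem; the equal-genus hypothesis then fixes $\chi(S)=2-2(k-1)(m-1)$. (The paper packages the arithmetic by writing the same identity for $M$, where $\widetilde{g}=0$, $\widetilde{b}=1$ and the angle sum is $(3-1/k-1/m)\pi$, and subtracting; substituting $\chi(S)$ directly gives $\frac{2\pi}{4km}\chi(S)=\pi(1/k+1/m-1)$ and hence the constant $(5-1/k-1/m)\pi$.) Without this symmetry-plus-genus input your computation cannot ``collapse to \eqref{E:L2}.''
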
 

\begin{proof} 
By \ref{L:GM}(iv) 
$\int_S K = 4km \int_{S\cap\Omu} K$,  
where $K$ is the Gaussian curvature of $S$. 
Combining with this the Gauss-Bonnet theorem applied to both $S$ and $S\cap\Omu$,  
and using \ref{L:1}, we conclude 
$$ 
2\pi ( 2-\widetilde{g} - \widetilde{b} ) = \frac{2\pi}{4km} \chi(S) 
 + \sum_{p\in{S\cap E_\Omu}} (\pi-\theta_p),  
$$ 
where $\chi(S)$ is the Euler characteristic of $S$. 
Since $M\cap E_\Omu$ consists of four points on edges with dihedral angles $\pi/2,\pi/2,\pi/k,\pi/m$, 
the same formula holds for $M$ (instead of $S$) and 
with $0,1$ instead of $\widetilde{g}$, $ \widetilde{b} $, 
and $(3-1/k-1/m)\pi$ instead of 
$ \sum_{p\in{S\cap E_\Omu}} (\pi-\theta_p)$   
Subtracting the formula for $M$ from the formula for $S$ we conclude the proof. 
\end{proof}

\begin{corollary} 
\label{L:3}
The assumptions in \ref{L:2} imply that $S\cap\Omu$ 
is a disc and one of the following holds. 
\\
(i) 
$\partial(S\cap\Omu)= S\cap\partial\Omu$ is a geodesic quadrilateral on $S$ with each of its four vertices on a different edge of $\Omu$ 
and with the two missed edges opposite (recall \ref{TEV}) and not on $C\cup C^\perp$. 
\\ 
(ii)
$\partial(S\cap\Omu)= S\cap\partial\Omu$ is a geodesic pentagon on $S$ with 
its vertices on four different edges of $\Omu$, 
two vertices on one edge, 
and exactly one vertex on each of the other three edges. 
The two missed edges together with the edge containing two pentagon vertices form the boundary of a face of $\Omu$. 
\end{corollary}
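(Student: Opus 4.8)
The plan is to read the entire statement off the single identity \eqref{E:L2} of Lemma \ref{L:2}, using Lemma \ref{L:1} for the local picture of the boundary. First I would fix the corner structure. By Lemma \ref{L:1}, $S\cap\Omu$ is a compact surface whose boundary $S\cap\partial\Omu$ is a disjoint union of closed piecewise-geodesic curves with all corners in the interiors of the edges of $\Omu$; at a corner $p$ the two incident sides lie on the two faces meeting along that edge, and since $S$ is orthogonal to both faces the interior angle of $S\cap\Omu$ at $p$ equals the dihedral angle $\theta_p$ of $\Omu$ there. Thus every $p\in S\cap E_{\Omu}$ is a genuine corner with exterior angle $\pi-\theta_p>0$, and by Lemma \ref{Om:p} one has $\theta_p\in\{\pi/2,\pi/k,\pi/m\}$, so each summand in \eqref{E:L2} is at least $\pi/2$.

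The topology comes next. The left-hand side of \eqref{E:L2} is nonnegative, so $2\widetilde g+2\widetilde b\le 5-1/k-1/m<5$; as $\widetilde g$ is even and $\widetilde b\ge1$, this forces $\widetilde g=0$ and $\widetilde b\in\{1,2\}$. Since $S\cap\Omu$ is connected (Lemma \ref{L:1}(iii)) and of genus zero, it is a disc when $\widetilde b=1$ and an annulus when $\widetilde b=2$. To discard the annulus I would observe that then \eqref{E:L2} reads $\sum_p(\pi-\theta_p)=(1-1/k-1/m)\pi<\pi$, so (each summand being $\ge\pi/2$) there is at most one corner; but then $S\cap\partial\Omu$ meets at most two of the four faces of $\Omu$, contradicting that $S$ meets every face (Lemma \ref{L:1}(i)). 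Hence $S\cap\Omu$ is a disc, the first assertion.

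For the disc, \eqref{E:L2} becomes $\sum_p(\pi-\theta_p)=(3-1/k-1/m)\pi<3\pi$, so there are at most five corners; on the other hand the single boundary circle meets all four faces, each in an arc with two endpoints, and counting endpoints face by face gives $\sum_{\text{faces}}(\#\text{endpoints})=2\,\#(S\cap E_{\Omu})$, whence at least four vertices. Thus $S\cap\partial\Omu$ is one geodesic polygon with four or five vertices. Writing $a,b,C_\Sigma$ for the numbers of vertices on the edge in $C$ (angle $\pi/k$), on the edge in $C^\perp$ (angle $\pi/m$), and on the four right-angled edges, \eqref{E:L2} reads $(a-1)(1-1/k)+(b-1)(1-1/m)+C_\Sigma/2=1$. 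Since the sides on each triangular face are disjoint arcs, every face carries an even number of endpoints, at least two. In the four-vertex case each face then has exactly two endpoints, which forces $a=b$ and $C_\Sigma=4-2a$; because $1/k+1/m<1$ the identity excludes $a=0$ and $a=2$, leaving $a=b=1,\ C_\Sigma=2$ with the two vertex-bearing right-angled edges opposite and off $C\cup C^\perp$ --- configuration (i). In the five-vertex case the endpoint totals are $(2,2,2,4)$, so exactly one face carries four endpoints, and the even-endpoint bookkeeping on the remaining faces places the two vertex-free edges and the doubly-hit edge on a single face --- configuration (ii).

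The step I expect to be the real work is this final incidence analysis. The corner count alone only yields four or five vertices; proving that in the quadrilateral case the two missed edges are genuinely \emph{opposite} (rather than, say, all vertices piling up on $C\cup C^\perp$, or two crowding onto one edge), and that in the pentagonal case the two missed edges together with the doubly-hit edge bound a \emph{face}, requires combining the even-endpoints-per-face bookkeeping with the exact angle identity, using the precise dihedral values $\pi/k,\pi/m,\pi/2$ of Lemma \ref{Om:p} to discard every spurious distribution of $(a,b,C_\Sigma)$. The mere bound $\theta_p<\pi$ does not suffice here; it is the exact identity, fed by the elementary spherical geometry of $\Omu$, that pins down the two symmetric pictures.
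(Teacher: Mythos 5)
Your proposal is correct: the disc claim, the $4$-or-$5$ vertex bound, and both incidence pictures all come out, and I could not find a gap (the only loose ends --- that a boundary component with exactly one corner is impossible, and that no side is a closed circle once a corner exists --- are immediate from the local picture in Lemma \ref{L:1}). The skeleton is the same as the paper's: identity \eqref{E:L2}, the lower bound $\pi-\theta_p\ge\pi/2$ per corner, and ``at least one side per face''. Where you genuinely diverge is the final classification. The paper argues geometrically: it rules out an edge with two vertices in the quadrilateral case by a bigon/connectedness argument (two sides joining the same pair of vertices would close up the boundary and miss two faces), deduces oppositeness of the missed edges from the cyclic order of the vertices $p_1,\dots,p_4$ along the polygon, and settles the pentagon case by an explicit incidence chase through the faces $F_0,F_1,F_2,F_3$. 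You instead set up the linear system $\sum_{e\subset\partial F}v_e=2s_F$ (even, $\ge 2$, total $2V$) over the four faces together with the exact angle identity $(a-1)(1-1/k)+(b-1)(1-1/m)+C_\Sigma/2=1$, and solve: in the four-vertex case this forces $a=b=1$, $C_\Sigma=2$ with the two occupied right-angled edges an opposite pair, and in the five-vertex case the face sums $(2,2,2,4)$ force the unique distribution $v=(2,1,1,1,0,0)$ with the doubly-hit edge and the two empty edges bounding a face. This buys you a uniform bookkeeping that never invokes the cyclic order of the vertices or the connectedness of the boundary polygon beyond $\widetilde b=1$, and it disposes of the ``two vertices on the $C$-edge'' quadrilateral scenario by the angle identity rather than by the bigon argument; the paper's version is more visual but relies on several small ad hoc exclusions that your linear system absorbs automatically. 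Either way the exact dihedral values $\pi/k,\pi/m,\pi/2$ are indispensable, exactly as you point out at the end.
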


\begin{proof} 
Suppose $ \widetilde{g} + \widetilde{b} \ge2$. 
The right-hand side of \eqref{E:L2} is then $<\pi$. 
Since each summand on the left-hand side of \eqref{E:L2} is $\ge\pi/2$, 
it follows that there is at most one point in $S\cap E_\Omu$. 
This implies by \ref{L:1}(i) that there can be no segments in the boundary and hence 
$S\cap E_\Omu =\emptyset$, 
which implies that the left-hand side of \eqref{E:L2} is $0$. 
Since $0<1/k+1/m<1$, the right-hand side cannot vanish and we have established a contradiction to our assumption that $ \widetilde{g} + \widetilde{b} \ge2$. 

We have proved that 
$ \widetilde{g} + \widetilde{b} = 1$ 
and therefore $S\cap\Omu$ is a disc.  
By \ref{L:1} then the boundary is a geodesic polygon on $S$ and its intersection with each face of $\Omu$ is a non-empty union of geodesic segments 
whose endpoints are contained in $E_\Omu\setminus V_\Omu$ (recall \ref{TEV}).  
Since each face of $\Omu$ contains at least one side, the geodesic polygon has at least four sides and at least four vertices. 
Since the right-hand side of \eqref{E:L2} is $<3\pi$, 
by again using the fact that each summand in the left-hand side is $\ge\pi/2$,
we conclude that the geodesic polygon has at most five vertices.  
It follows that there are only two possibilities: 
(i) there are exactly four vertices with exactly one side in each face 
or (ii) 
there are five vertices 
with two sides in some face and each remaining face containing exactly one side. 

No edge of $\Omu$ can contain three polygon vertices, because this would imply that each adjacent face contains at least two polygon sides.   
Suppose now there is an edge containing two vertices.  
There cannot be polygon sides connecting these vertices on both faces adjacent to the edge 
because this would contradict the connectedness of 
the boundary $\partial(S\cap\Omu)= S\cap\partial\Omu$. 
One of the adjacent faces therefore 
contains two polygon sides, and by the above we are in case (ii) and we have a pentagon. 

Suppose now we are in case (i). 
By the above we have a quadrilateral with each of its vertices on a different edge. 
Moreover no three of the edges containing polygon vertices can share a vertex of $\Omu$ because we would have then a face with three polygon vertices on its boundary. 
Let $p_1,p_2,p_3,p_4$ be the vertices of the quadrilateral numbered so that they are consecutive along the polygon and let 
$e_1,e_2,e_3,e_4$ be the edges on which they lie respectively. 
We claim now that $e_2$ and $e_4$ are opposite. 
If they were not, since $e_1$ is also adjacent to both, $e_1,e_2,e_3$ would either share a vertex or form the boundary of a face, which leads to a contradiction. 
Similarly $e_1$ and $e_3$ are opposite.  
The remaining edges are then also opposite. 
To complete the proof of case (i) we observe that if the opposite edges on $C$ and $C^\perp$ did not contain polygon vertices, 
the left-hand side of \eqref{E:L2} would equal $2\pi$ while the right-hand side is $>2\pi$, clearly a contradiction.  

Finally suppose we are in case (ii). 
Let $F_0$ be the face which contains two pentagon sides.
Since these sides are geodesic segments on $S$ meeting the edges of $F_0$
orthogonally,
$\partial F_0$ contains exactly four polygon vertices,
with at least one edge containing exactly two of these vertices.
Let $e_1$ be such an edge, calling the other two edges $e_2$ and $e_3$,
and for each $i \in \{1,2,3\}$ let $F_i$ be the face that is adjacent
to $e_i$ and is not $F_0$.
The remaining vertex of the pentagon, that is not on $\partial F_0$,
cannot be on $\partial F_1$ either, because otherwise $\partial F_1$
would contain at least three vertices but $F_1$ only one edge,
an impossibility.
The fifth pentagon vertex therefore lies on the edge $e_1'$
opposite $e_1$.
There must be two pentagon sides connecting this vertex
to the two not on $e_1$, but each of the faces $F_2$ and $F_3$
contains exactly one side, so each of $e_2$, $e_3$
contains exactly one vertex.
\end{proof}

At the moment we need to strengthen the hypothesis $\GrpSigma\subset\Gsym^S$ we have imposed so far in order to complete the proof of our theorem. 
As discussed in remark \ref{R:GrpS} it would be highly desirable to prove the theorem under the weaker hypothesis $\GrpSigma\subset\Gsym^S$ we have imposed up to now. 

\begin{theorem} 
\label{T:main} 
Let $S$ 
be a closed embedded minimal surface is $\Sph^3$ 
and $M=M[m,k]$ a Lawson surface as in Theorem \ref{T:lawson}  
with $k \geq 2$, $m \geq 3$. 
If $S$ and $M$ have the same genus (which is $(k-1)(m-1)$ by \ref{T:lawson}) and $\GrpM\subset\Gsym^S$,  
then $S$ and $M$ are congruent. 
\end{theorem}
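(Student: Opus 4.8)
The plan is to show that $S$ agrees with $M$ on a single fundamental domain of the $\GrpM$-action and then to propagate this by symmetry. Since $\GrpSigma\subset\GrpM\subset\Gsym^S$ by \ref{L:GM}, the hypotheses of \ref{L:1}, \ref{L:2} and \ref{L:3} are satisfied by $S$. Fixing $\Omu\in\Omboldu$, Corollary \ref{L:3} then tells us that $S\cap\Omu$ is an embedded minimal disc meeting each face of $\Omu$ orthogonally, whose boundary $S\cap\partial\Omu$ is either a geodesic quadrilateral with one side on each face (case (i)) or a geodesic pentagon with two sides on one face (case (ii)), the vertices lying on $E_\Omu\setminus V_\Omu$.

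Next I would bring in the symmetries present in $\GrpM$ but not in $\GrpSigma$, the decisive one being the reflection $\refl_{C'}$ in the axis $C'=\Sph(\ptt_i,\ptt^j)$ of $\Omu=\Om_i^j$, which by \ref{L:GM}(i) generates the full stabilizer $\HGrp^\Omu$ of $\Omu$ and hence preserves $S\cap\Omu$. Using the spherical geometry of \ref{L:obs}, one checks that $\refl_{C'}$ restricts to $C$ and to $C^\perp$ as the reflections through $\ptt_i$ and $\ptt^j$; consequently it interchanges the two faces lying in $\Sigma_{(i-1/2)\pi/m}$ and $\Sigma_{(i+1/2)\pi/m}$, and likewise the two faces lying in $\Sigma^{(j-1/2)\pi/k}$ and $\Sigma^{(j+1/2)\pi/k}$, so it fixes no face of $\Omu$. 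Since a pentagon as in case (ii) has a \emph{unique} face carrying two sides, this excludes case (ii). In case (i) the two missed edges are opposite and off $C\cup C^\perp$, so the $C$-edge and $C^\perp$-edge of $\Omu$ each carry exactly one quadrilateral vertex; as $\refl_{C'}$ fixes these edges setwise but fixes each only at $C'\cap C=\{\ptt_i\}$ and $C'\cap C^\perp=\{\ptt^j\}$, those two vertices are forced to sit at $\ptt_i$ and $\ptt^j$, exactly as for $M$. Computing $M\cap\Omu$ from \ref{Dsym} and \ref{Dpm}, one finds it is the disc obtained by gluing two quarter Lawson discs (as in \ref{Dsym}(ii)) along the axis arc $\overline{\ptt_i\ptt^j}$, with boundary a geodesic quadrilateral through $\ptt_i$, $\ptt^j$ and two of the points $\QQ$ of \ref{Dpm} on an opposite pair of diagonal edges, which is combinatorially identical to $S\cap\Omu$.

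Thus both $S\cap\Omu$ and $M\cap\Omu$ are minimal discs in $\Omu$ meeting $\partial\Omu$ orthogonally along a geodesic quadrilateral with the same incidence pattern (one vertex at $\ptt_i$, one at $\ptt^j$, and one on each of an opposite pair of diagonal edges). I would then invoke the uniqueness of the Lawson discs, the free-boundary uniqueness obtained by generalizing \cite{LindexI}, to conclude $S\cap\Omu=M\cap\Omu$. Since $\GrpM$ acts transitively on $\Omboldu$ by \ref{L:GM}(i) and both $S$ and $M$ are $\GrpM$-invariant, this equality propagates to $S=M$. The only residual freedom is which opposite pair of diagonal edges carries the $\QQ$-vertices; the two choices are interchanged by the rotation along $C$ carrying $\Ombold_e$ to $\Ombold_o$, which sends $M$ to a congruent Lawson surface, so in general $S$ is congruent (rather than equal) to $M$.

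The genuine difficulty, and the step I expect to be hardest, is the uniqueness of the Lawson discs used above. Unlike \ref{T:lawson}, the boundary here is not prescribed: only orthogonality to $\partial\Omu$ and the vertex incidences are known, so no fixed-boundary comparison is available. The natural approach is to slide $M\cap\Omu$ across $S\cap\Omu$ by the one-parameter family $\rot^t$ of rotations about the axis $C'$ (as in \ref{L-alex} and the proof of \ref{T:lawson}), to isolate a first point of contact, and to apply the interior and boundary maximum principles in the style of \cite{schoen1983}. Making this rigorous near the corners $\ptt_i,\ptt^j,\QQ$ and along the free boundary, where the slicing degenerates, is precisely the delicate analysis that the generalization of \cite{LindexI} must supply.
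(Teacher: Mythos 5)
Your reduction to case (i) of Corollary \ref{L:3} and the pinning of two quadrilateral vertices at $\ptt_i\in C$ and $\ptt^j\in C^\perp$ are correct, and your route there (the reflection $\refl_{C'}$ in the axis lies in $\HGrp^{\Omu}\subset\GrpM\subset\Gsym^S$, permutes the faces of $\Omu$ without fixing any, hence kills the pentagon and forces the single vertices on the $C$- and $C^\perp$-edges onto the axis) is a legitimate variant of the paper's argument, which instead deduces that the axis is tangent to $S$ at such a vertex and hence contained in $S$, and then rules out the pentagon by a parity count of vertices on those edges. Up to this point you and the paper are essentially aligned.

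The genuine gap is in your endgame. You reduce the theorem to a \emph{free-boundary} uniqueness statement: that $M\cap\Omu$ is the only minimal disc in $\Omu$ meeting $\partial\Omu$ orthogonally with the given vertex incidences. No such statement is available — Theorem \ref{T:lawson} gives uniqueness only for the \emph{fixed} boundary $Q_i^j$ — and, as you yourself concede, the sliding argument you sketch degenerates precisely at the free boundary and the corners, so this is not a routine generalization of \cite{LindexI}; it is an open step, and your proof does not close it. The paper avoids this entirely by a reassembly trick: writing $\Omu=\Om_{1/2}^{1/2}$, it takes the closure $D^+_+$ of the piece of $S\cap\Omu$ cut off by the axis arc $\overline{\ptt_{1/2}\ptt^{1/2}}$, whose remaining boundary arcs $\alpha,\beta$ lie in the bisecting spheres $\Sigma_0,\Sigma^0$, and forms $D:=\bigcup\{gD_+^+: g\in\Grp^{\Om_0^0}\}$. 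Because $S$ is $\Grp^{\Om_0^0}$-invariant, $D\subset S$ is a minimal surface with the \emph{prescribed} boundary $\partial D=Q_0^0$; a sweep by rotated great spheres (e.g.\ rotating $\Sigma_{-\pi/m}$ toward $\Sigma_{-\pi/2m}$) plus the interior maximum principle confines $D$ to $\Om_0^0$, and then the fixed-boundary uniqueness of Theorem \ref{T:lawson} gives $D=D_0^0$ and hence $S=M$. You should replace your appeal to an unproved free-boundary uniqueness with this reflection-and-reassembly step; without it the proof is incomplete.
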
 

\begin{proof}
The assumptions of \ref{L:3} hold and therefore either \ref{L:3}.(i) or \ref{L:3}.(ii) holds. 
In both cases at least one of the edges in $C$ or $C^\perp$ contains exactly one polygon vertex $q$ of 
the boundary $\partial(S\cap\Omu)= S\cap\partial\Omu$. 
Since $S$ and these edges are invariant under reflection with respect to the axis of $\Omu$, 
we conclude that $q$ lies on the axis of $\Omu$. 
The axis of $\Omu$ is then tangential or perpendicular to $T_qS$. 
Since $S$ intersects the edges orthogonally, the former holds, and the axis is contained therefore on $S$. 
This means that there is an odd number of vertices on each of the edges in $C$ or $C^\perp$. 
This is inconsistent with case (ii) and so we conclude that case (i) holds. 

To facilitate the presentation we take $\Omu=\Om^{1/2}_{1/2}\in \Omboldu$. 
Note that $\mathbf{\Sigma}$ and $\mathbf{C}$ are preserved by $\refl_{\Sigma_{\pi/2m}}$. 
By replacing then $S$ with $\refl_{\Sigma_{\pi/2m}}S$ if necessary we ensure that the geodesic quadrilateral 
$\partial(S\cap\Omu)= S\cap\partial\Omu$ has a vertex which we call $\QQ\in\overline{\ptt_0\ptt^0}$. 
The vertices then of the geodesic quadrilateral $\partial(S\cap\Omu)= S\cap\partial\Omu$ 
are $\ptt^{1/2}$, $\QQ\in \overline{\ptt_0\ptt^0}$, $\ptt_{1/2}$, and $\refl\QQ  \in \overline{\ptt_1\ptt^1}$,  
where we set $\refl:= \refl_{\ptt^{1/2}, \ptt_{1/2}}$.  
Recall that $V_\Omu=\{\ptt_0,\ptt_1,\ptt^1,\ptt^0\}$. 
We call $\alpha$ the side of $\partial(S\cap\Omu)= S\cap\partial\Omu$ contained in the face $\overline{\ptt^1 \ptt^0 \ptt_0} \subset \Sigma_0$ 
and which has endpoints $\ptt^{1/2}$ and $\QQ$; 
and $\beta$ the side contained in the face 
$\overline{\ptt_1 \ptt_0 \ptt^0} \subset \Sigma^0$ 
which has endpoints $\ptt_{1/2}$ and $\QQ$. 
The other two sides of $\partial(S\cap\Omu)= S\cap\partial\Omu$ are then 
$\refl\alpha \subset \overline{\ptt^1 \ptt^0 \ptt_1} \subset \Sigma_{\pi/m}$ with endpoints 
$\ptt_{1/2}$ and $\refl\QQ$, 
and 
$\refl\beta \subset \overline{\ptt_1 \ptt_0 \ptt^1} \subset \Sigma^{\pi/k}$ with endpoints 
$\refl\QQ $  and $\ptt^{1/2}$. 
Since 
$\overline{\ptt^{1/2} \ptt_{1/2}} \subset S\cap\Om^{1/2}_{1/2}$, 
the former subdivides the latter into two components. 
We call $D^+_+$ the closure of the component which satisfies 
$\partial D^+_+= \overline{\ptt^{1/2} \ptt_{1/2}} \cup\alpha\cup\beta$. 

To help visualize the situation note that 
$\Om^{1/2}_{1/2}= \Om^{1-}_{1-}\cup  \Om^{1-}_{0+}\cup  \Om^{0+}_{1-}\cup  \Om^{0+}_{0+}$. 
$M$ does not intersect the interior of 
$\Om^{1-}_{0+}\cup  \Om^{0+}_{1-}$. 
Recall that by \ref{Dpm} $M\cap \Om^{0+}_{0+} = D^{0+}_{0+}$ and 
$\partial D^{0+}_{0+} \, = \, \overline{ \ptt_{ \frac12 } \, \ptt^{ \frac12 } }  \cup \alpha_0^{0+} \cup \beta_{0+}^0$ 
is a geodesic triangle on $M$ with vertices $\ptt_{1/2}$, $\ptt^{1/2}$, $\QQ_0^0$.  
Note that $\alpha_0^{0+} \subset \Sigma_0$, $\beta_{0+}^0 \subset\Sigma^0$, and $\QQ_0^0\in \overline{\ptt_0\ptt^0} \subset C^0_0$. 
$D^+_+$ corresponds to $D^{0+}_{0+}$, but at this moment we only know that $D^+_+ \subset \Om^{1/2}_{1/2}$, while $D^{0+}_{0+}\subset \Om^{0+}_{0+}$.  

We define now 
(recall \ref{DgrpO})  
$D:= \bigcup \{ g D^+_+ : g\in \Grp_{\phantom{s} }^{\Om_0^0} \}  $.  
Clearly then $D \subset \bigcup\Om_{\pm1/2}^{\pm1/2} = \overline{\,\ptt_{-1}\ptt^{-1}\ptt^1\ptt_1\,}$, $D\subset S$, and $\partial D= Q_0^0$. 
By rotating $\Sigma_{-\pi/m}$ 
(which contains a face of $\overline{\,\ptt_{-1}\ptt^{-1}\ptt^1\ptt_1\,}$)      
about $C^\perp$ towards $\Sigma_{-\pi/2m}$ 
(which contains a face of $\Om_0^0$) 
we obtain a contradiction using the maximum principle under the assumption that we have a first contact with $D$ before reaching 
$\Sigma_{-\pi/2m}$.  
Arguing similarly for the other faces we prove $D\subset\Om_0^0$. 
Applying then the uniqueness part of \ref{T:lawson} we conclude that $D=D_0^0$, 
which implies $S=M$ and completes the proof. 
\end{proof} 

\begin{remark}[Flapping and Theorem \ref{T:main}] 
\label{R:GrpS} 
We expect that Theorem \ref{T:main} remains true if the hypothesis 
$\GrpM\subset\Gsym^S$ is weakened to 
$\GrpSigma\subset\Gsym^S$ (recall that by \ref{L:GM}(iii) $\GrpSigma$ is an index-$2$ subgroup of $\GrpM$).  
When $k=2$ the union $\Wcal$ of two great-spheres through $C$ can satisfy 
$\GrpSigma\subset\Gsym^\Wcal$ with the spheres having an arbitrary prescribed angle. 
More generally, if $k$ is even, the union $\Wcal$ of $k$ great-spheres through $C$ can satisfy 
$\GrpSigma\subset\Gsym^\Wcal$ with two adjacent spheres having an arbitrary prescribed angle. 
The anticipated stronger version of Theorem \ref{T:main} would imply then that the usual desingularization construction of a minimal surface from such a $\Wcal$ 
cannot be successful when the angle $\ne \pi/k$.  
This would be significant progress in answering the no-flapping questions posed in \cite{alm20}*{Section 4.2}, 
going beyond the local isolation of the Lawson surfaces established in \cite{LindexI}.   
\end{remark} 

\begin{remark}[Counterexamples to modifications of Theorem \ref{T:main}]  
\label{R:2} 
A closed embedded minimal surface which satisfies all the assumptions of \ref{T:main} except that it has a different genus  
can be constructed when 
$m,k$ are large in terms of a fixed $m/k\in\N$  
by easy modifications of the doubling constructions in \cite{kapouleas:yang,wiygul2020}.  
The surface constructed is a doubling of $\T$, 
the Clifford torus whose points are equidistant from $C$ and $C^\perp$,
with $2km$ catenoidal necks located at the points of $L$, the set of centers of $\Om\cap\T$ for each $\Om\in\Ombold_e$. 
It is also known that there are stackings of the Clifford torus \cite[Remark 1.3]{wiygul2020} which are not congruent although 
they have the same symmetry group and genus.  
\end{remark}

\bibliographystyle{amsplain}
\bibliography{paper}
\end{document}